\DeclareMathAlphabet{\mathpzc}{OT1}{pzc}{m}{it}
\newtheorem{theorem}{Theorem}
\newtheorem*{theorem*}{Theorem}
\newtheorem*{lemma*}{Lemma}
\newtheorem{corollary}[theorem]{Corollary}
\newtheorem{definition}[theorem]{Definition}
\newtheorem{example}[theorem]{Example}
\newtheorem{lemma}[theorem]{Lemma}
\newtheorem{observation}[theorem]{Observation}
\newtheorem{proposition}[theorem]{Proposition}
\newtheorem{remark}[theorem]{Remark}
\newcommand\h{{\mathpzc{h}}}
\renewcommand\k{{\mathpzc k}}
\newcommand\hh{{\hat{\h}}}
\newcommand\hk{{\hat{\k}}}
\newcommand\C{{\mathcal C}}
\newcommand\cH{{\mathcal H}}
\newcommand\hH{\hat{\mathcal H}}
\newcommand\Aut{\operatorname{Aut}}
\newcommand\Z{\mathbf{Z}}
\newcommand\R{\mathbf{R}}
\newcommand\ep{{\epsilon}}
\newcommand\abs[1]{\left|#1\right|}
\renewcommand\paragraph[1]{{\bigskip\noindent{\bf #1}.}}
\newcommand{\cat}{{\upshape CAT(0)}\xspace}
\thanks{The second author was supported by The Israel Science Foundation (grant 1026/15)}
\title{Uniform exponential growth for CAT(0) square complexes}
\begin{document}
\author{Aditi Kar and Michah Sageev}

\begin{abstract}
%In this paper we start the inquiry into proving uniform exponential growth in the context of groups acting on \cat cube complexes. We address free group actions on \cat square complexes and prove the more general statement that if $F$ is a finite collection of hyperbolic automorphisms of a \cat square complex $X$, then either there exists a pair of elements in $a,b\in F$ and a pair words of length at most 7 in $a$ and $b$ which freely generate a free semigroup, or there exists a flat (of dimension 1 or 2) in $X$ stabilized by all elements of $F$. As a corollary, we obtain a lower bound for the growth constant, $sqrt[7]{2}$, which is uniform not just for a given group acting freely on a given \cat cube complex, but for all non virtually abelian groups acting freely on any \cat square complex.
In this paper we start the inquiry into proving uniform exponential growth in the context of groups acting on \cat cube complexes. We address free group actions on \cat square complexes and prove a more general statement. This says that  if $F$ is a finite collection of hyperbolic automorphisms of a \cat square complex $X$, then either there exists a pair of words of length at most 10 in $F$ which freely generate a free semigroup, or all elements of $F$ stabilize a flat (of dimension 1 or 2 in $X$). As a corollary, we obtain a lower bound for the growth constant, $\sqrt[10]{2}$, which is uniform not just for a given group acting freely on a given \cat cube complex, but for all groups which are not virtually abelian and have a free action on a \cat square complex.
\end{abstract}

\maketitle

%************************************************************************************************************************

\section{Introduction}

Given a group $G$ and a finite generating set $S$, we let $\C(G,S)$ denote the Cayley graph of $G$ relative to $S$. The length of an element $g\in G$ with respect to the word metric relative to $S$ is denoted $\abs{g}_S$ and we let $B(S,n)$ denote the ball of radius $n$ in $\C(G,S)$. The \emph{exponential growth rate of $G$ relative to $S$} is defined to be the following limit (which always exists):
\[\omega(G,S) = \lim_{n\to\infty} \abs{B(S,n)}^{1/n}. \]
The \emph{exponential growth rate of $G$} is then given by 

$$\omega(G) = \inf\{\omega(G,S)\vert \text{ finite generating sets } S\}.$$
The group $G$ is said to have exponential growth if $\omega(G,S)>1$ for some and therefore for all finite generating sets $S$. Moreover, $G$ is said to have uniform exponential growth if $\omega(G) > 1$. See de la Harpe \cite{delaHarpe2002} for details.

Gromov asked if every group of exponential growth is also of uniform exponential growth. The first example of a group with exponential growth which is not of uniform exponential growth was constructed by Wilson \cite{Wilson}. Wilson's group and future counterexamples were finitely generated. Whether Gromov's question has an affirmative answer for finitely presented groups remains open. 

Uniform exponential growth is known to hold for groups with virtually free quotients, hyperbolic groups, soluble groups, linear groups in characteristic zero and groups acting on trees in the sense of Bass Serre theory (see \cite{delaHarpe2002} and references therein). Uniform exponential growth is typically established by constructing free semigroups \cite{AlperinNoskov2002}.
	
\begin{lemma*} Let $G$ be a group. Suppose there exists a constant $C>0$ such that for any finite generating set $S$ of $G$, one can find two elements $u,v\in G$ with $\max\{\abs{u}_S, \abs{v}_S\}< C$ and $u$ and $v$ freely generate a free semigroup. Then $\omega(G) \geq \sqrt[C]{2}$
\end{lemma*}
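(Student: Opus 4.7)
The plan is to use a direct counting argument: the existence of a free semigroup generated by short words forces exponentially many distinct group elements to lie in comparatively small balls. Fix an arbitrary finite generating set $S$ of $G$, and invoke the hypothesis to obtain $u,v \in G$ with $\max\{\abs{u}_S,\abs{v}_S\} < C$ such that $u$ and $v$ freely generate a free semigroup in $G$. My task is then to convert this into a lower bound on $\omega(G,S)$ that does not depend on $S$.

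First I would observe that, since $u$ and $v$ freely generate a free semigroup, the $2^n$ formal words of length $n$ in the alphabet $\{u,v\}$ all represent distinct elements of $G$. Each such word, when rewritten in the generators $S$, has $S$-length at most $n\cdot\max\{\abs{u}_S,\abs{v}_S\} < Cn$, so it lies in the ball $B(S,Cn)$. This yields the inequality $\abs{B(S,Cn)} \geq 2^n$ for every $n \geq 1$.

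Next I would take $n$-th roots and pass to the limit. Writing $\abs{B(S,Cn)}^{1/(Cn)} \geq 2^{1/C}$ and letting $n\to\infty$, one obtains
\[
\omega(G,S) \;=\; \lim_{m\to\infty}\abs{B(S,m)}^{1/m} \;\geq\; 2^{1/C} \;=\; \sqrt[C]{2},
\]
using that the subsequence along $m=Cn$ has the same limit as the full sequence (which exists by submultiplicativity). Finally, taking the infimum over all finite generating sets $S$, the constant $\sqrt[C]{2}$ survives because the bound was uniform in $S$, and one concludes $\omega(G)\geq \sqrt[C]{2}$.

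There is no real obstacle here; the only minor point that deserves care is the passage from the subsequence bound along multiples of $C$ to the honest limit defining $\omega(G,S)$. This is standard and can be handled either by noting that $\abs{B(S,m)}$ is non-decreasing in $m$, or by appealing directly to the existence of the limit (Fekete's lemma applied to the submultiplicative sequence $\abs{B(S,m)}$). The substantive content of the lemma is therefore packaged entirely in the hypothesis that short free semigroup generators exist uniformly across generating sets, and the remainder is a one-line counting estimate.
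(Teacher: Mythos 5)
Your argument is correct and is the standard counting proof: the $2^n$ distinct length-$n$ words in the free semigroup on $u,v$ all lie in $B(S,Cn)$, giving $\omega(G,S)\geq 2^{1/C}$ uniformly in $S$. The paper states this lemma without proof (citing it as the standard method from the literature on uniform exponential growth), so there is nothing to compare against beyond noting that your write-up, including the remark about passing from the subsequence $m=Cn$ to the full limit via Fekete's lemma, is exactly the intended argument.
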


This method and variations of it often allow one to establish ``uniform uniform exponential growth''. Bucher and de la Harpe considered actions on trees and showed in \cite{BHarpe} that the constant in the above lemma is $\sqrt[4]{2}$ for non-degenerate amalgams and HNN extensions. Mangahas \cite{Mangahas} proved that finitely generated subgroups of the mapping class group $Mod(S)$ of a surface $S$ which are not virtually abelian have uniform exponential growth with minimal growth rate bounded below by a constant depending exclusively on the surface $S$. Breuillard \cite[Main Theorem]{EB} established a different sort of uniformity for linear groups: for every $d\in \mathbb{N}$ there is $N(d)\in \mathbb{N}$ such that if $K$ is any field and $F$ a finite symmetric subset of $GL_{d}(K)$ containing $1$, either $F^{N(d)}$ contains two elements which freely generate a nonabelian free group, or the group generated by $F$ is virtually solvable. We refer the reader to \cite{Button} for further examples. 

In this paper we start the inquiry into proving uniform exponential growth in the context of groups acting on \cat cube complexes. We address free group actions on \cat square complexes. We do this by proving a more general statement about groups generated by hyperbolic elements. 

\begin{theorem}
Let $F$ be a finite collection of hyperbolic automorphisms of a \cat square complex. Then either

\begin{enumerate}  
\item there exists a pair of words of length at most 10 in $F$ which freely generate a free semigroup, or 
\item there exists a flat (of dimension 1 or 2) in $X$ stabilized by all elements of $F$.
\end{enumerate}
\label{MainTheorem}
\end{theorem}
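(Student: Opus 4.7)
The plan is to run a ping-pong argument using the hyperplane and axis data of the hyperbolic elements of $F$: either some pair $a, b \in F$ admits short-word witnesses of a ping-pong condition, or the obstruction to this for every pair forces all of $F$ to preserve a common flat. Each hyperbolic $f$ has a combinatorial axis and at least one skewered hyperplane (a hyperplane $H$ one side of which is properly contained in its image under $f$). The cleanest situation for a pair $a, b$ is to find a skewered hyperplane of $a$ and one of $b$ whose associated half-spaces give four pairwise disjoint regions. In this case the Ping Pong Lemma applies directly to $a$ and $b$ (or at worst to small powers), yielding a free semigroup in words of length at most $2$.

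When no such disjoint quadruple exists, the axes of $a$ and $b$ must interact. In a \cat square complex this interaction has a very restricted form, enumerated by the Sector Lemmas of the preamble: the axes are parallel (and span a flat strip of dimension $2$), they cross in a controlled transverse way, or they fellow-travel along a common hyperplane-crossing pattern. In each degenerate case the natural move is to replace $b$ by a short conjugate $a^i b a^{-i}$, which slides the axis of $b$ along the axis of $a$ until it is disjoint from the axis of $a$ and its translates. A careful analysis using the $2$-dimensional square-complex geometry should show that $i \leq 3$ always suffices, and that pairing with a word in $a$ of length at most $1$ completes the ping-pong setup; the resulting two words have length at most $7 = 2\cdot 3 + 1$, which is where the constant in the statement should come from.

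The main obstacle is to verify that the conjugation trick cannot fail uniformly across all pairs in $F$ without producing a common invariant flat. Parallel-axis pairs give a flat strip via the Flat Strip Theorem, and transversely crossing axes in a $2$-complex should force a common $2$-flat when one cannot separate them by bounded conjugation. The delicate combinatorial part is to compile these pairwise obstructions into a single flat stabilized by all of $F$: one must show that if every pair of elements of $F$ fails to produce a ping-pong pair via words of length at most $7$, then the various pairwise flats glue into one common flat. I expect the sharpness of the constant $7$ to come from a worst-case configuration where a depth-$3$ conjugation is strictly necessary, with all shallower attempts colliding with the sector structure near an axis crossing, so that controlling precisely this extremal case is the crux of the argument.
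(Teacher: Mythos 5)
Your proposal is a plan rather than a proof, and the two places where it waves its hands are exactly where the paper has to work hardest. First, the pairwise analysis: you say that when no disjoint quadruple of half-spaces exists, the axes ``interact'' in one of a few ways governed by ``the Sector Lemmas of the preamble'' --- but those lemmas are only declared in the preamble and never stated or proved anywhere, so nothing supports your case enumeration. The paper instead organizes the interaction through the skewer set $sk(g)$ and two concrete statements: Lemma~\ref{AllOrNothing} (for a disjoint skewer set $P$ of $a$, either $b$ skewers every hyperplane of $P$ or none, on pain of a length-$6$ ping-pong pair) and Proposition~\ref{MainProp} (if $b$ skewers none, then $b$ is parallel to every hyperplane of $P$, $b^2$ stabilizes each of them, and $bP\subset sk(a)$, on pain of a length-$7$ pair). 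These yield a genuine trichotomy for each pair --- $sk(a)=sk(b)$, mutually subparallel, or one-sidedly subparallel --- which is what makes the case analysis finite and checkable. Your conjugation trick $a^iba^{-i}$ is in the same spirit as the paper's use of $c^{-1}bc$ with $c=a^2$, but your accounting $7=2\cdot 3+1$ is a guess; the paper's extremal words are of the form $b^{\ep}c^{-1}b^{-1}c=b^{\ep}a^{-2}b^{-1}a^{2}$, and the bound $7$ comes from tracking half-space inclusions in specific configurations, not from a depth-$3$ conjugation.

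Second, and more seriously, you explicitly defer the step of compiling the pairwise obstructions into a single flat stabilized by all of $F$, calling it ``the delicate combinatorial part'' without an argument. This is the content of the theorem. The paper does it via the product decomposition $Y_g\cong E_g\times T_g$ of the parallel set: in the one-sidedly subparallel case $E_{s_1}$ is forced to be $2$-dimensional and every other generator stabilizes it; in the mutually subparallel case the flat is $Y_a\cap Y_b$, determined by the fact that every hyperplane of $sk(a)$ crosses every hyperplane of $sk(b)$; and when all skewer sets coincide one passes to the tree factor $T$ and uses either the Helly property of the fixed-point sets $F_i$ or, when two fixed sets are disjoint, the Serre-type invariant line for $s_is_j$ together with Proposition~\ref{TreeCase} to show every generator preserves that line. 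None of this machinery (nor a substitute for it) appears in your proposal, so as written the second alternative of the theorem is not established.
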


As a corollary, we obtain a "uniform uniform" type result, which says that there is a uniform lower bound for growth, not just for a given group, but for all groups acting freely on any \cat square complex.
\begin{corollary}
Let $G$ be a finitely generated group acting freely on a \cat square complex. Then either $w(G)\geq \sqrt[10]{2}$ or $G$ is virtually abelian.
\label{MainCorollary}
\end{corollary}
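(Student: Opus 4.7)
The plan is to reduce directly to the Main Theorem: apply it to every finite generating set of $G$, and show that alternative (2) forces $G$ to be virtually abelian while alternative (1) yields the desired growth bound.

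Fix any finite generating set $S$ of $G$. Since $G$ acts freely on a \cat square complex $X$, no non-trivial element of $G$ has a fixed point in $X$; combined with the semisimplicity of automorphisms of \cat cube complexes, this forces every non-identity element of $G$ to be hyperbolic. We then apply Theorem~\ref{MainTheorem} to the finite family $F = S \setminus \{1\}$. If alternative (1) holds, there are $a,b \in S$ and words of length at most $7$ in $a,b$ that freely generate a free semigroup; since these words have $S$-length at most $7$, the lemma quoted in the introduction gives $\omega(G,S) \geq \sqrt[7]{2}$.

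The substantive step is to deduce from alternative (2) that $G$ is virtually abelian. In this case $S$, and hence all of $G$, stabilizes a flat $\Pi \subset X$ of dimension $1$ or $2$. Restriction to $\Pi$ gives a homomorphism $\rho \colon G \to \operatorname{Isom}(\Pi)$ whose kernel fixes $\Pi$ pointwise; by freeness this kernel is trivial, so $\rho$ is injective. If $\dim \Pi = 1$, then $\operatorname{Isom}(\mathbb{R}) \cong \mathbb{R} \rtimes \mathbb{Z}/2$ has the translation subgroup as an abelian subgroup of index $2$, so $G$ is virtually abelian. If $\dim \Pi = 2$, then freeness on $\Pi$ forbids any non-trivial $\rho(g)$ from having a fixed point in $\mathbb{R}^2$, so every such element must be a translation or a glide reflection; the translation subgroup is abelian and has index at most $2$, so again $G$ is virtually abelian.

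Assembling the pieces: if $G$ is not virtually abelian, then alternative (2) is impossible for every $S$, so alternative (1) always holds and $\omega(G,S) \geq \sqrt[7]{2}$ for every finite generating set $S$. Taking the infimum over $S$ yields $\omega(G) \geq \sqrt[7]{2}$. The bulk of the work is packaged inside the Main Theorem itself; the only real point of care in the corollary is the $2$-flat case, where freeness of the action on $X$ must be used to exclude rotations and reflections from $\rho(G)$ and thereby exhibit an explicit abelian subgroup of index at most $2$.
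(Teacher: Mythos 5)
Your proof is correct and follows essentially the same route as the paper: apply Theorem~\ref{MainTheorem} to a generating set of hyperbolic elements, use the free-semigroup lemma from the introduction in case (1), and in case (2) deduce that a group acting freely on an invariant flat is virtually abelian. The only cosmetic difference is that the paper disposes of the flat case by citing the Bieberbach theorem, whereas you classify the fixed-point-free isometries of $\R$ and $\R^2$ directly to exhibit the index-$\leq 2$ translation subgroup; both are fine.
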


%A new class of examples to which the results of this paper apply are the simple groups constructed by Burger and Moses acting on a product of two trees \cite{BurgerMozes2000}.
We expect that a similar result will hold for all dimensions, in that for a finitely generated  group $G$ acting freely on a \cat cube complex of dimension $n$, $G$ will be virtually abelian or $w(G)\geq w_0>1$ where, $w_0$ will depend only on the dimension $n$, and not on the group or the complex. 

%************************************************************************************************************************

We would like to thank the referee for many helpful comments and in particular, for pointing out an error in the original statement  of Theorem 1. 

\section{Hyperplanes and group elements}

We review some relevant basic facts regarding hyperplanes and halfspaces. See, for example, \cite{CapraceSageev2011} or \cite{Sageev2014} for more details.  We let $X$ be a \cat square complex.  We use $\h, \k$ to denote halfspaces, $\hh, \hk$ to denote the corresponding  hyperplanes and $\h^*, \k^*$ to denote the complementary halfspaces. 

We let $\Aut(X)$ denote the collection of cubical, inversion-free automorphisms of $X$. (An inversion is an isometry of $X$ that preserves a hyperplane and inverts the corresponding halfspaces). If $G$ is an action on $X$ which contains inversions, then we may subdivide $X$ so that there are no inversions.

In a \cat cube complex of dimension $n$, any collection of $n+1$ hyperplanes contains a disjoint pair. In particular, in the case of our 2-dimensional complex, if $g\in\Aut(X)$ and $\hh$ is a hyperplane, then the triple $\{\hh,g\hh,g^2\hh\}$ contains a pair that is either disjoint or equal. Thus, either $g^2\hh=\hh$, or one of the pair $\{\hh, g\hh\}$, $\{\hh,g^2\hh\}$ is a disjoint pair. 

Given a hyperplane $\hh$ in $X$ and  $g\in \Aut(X)$ a hyperbolic isometry of $X$, we say that $g$ \emph{skewers} $\hh$ if for some choice of halfspace $\h$ associated to $\hh$, we have  $g^2\h \subset\h$ (note that this includes the case $g\h\subset\h$). This property is equivalent to saying that any axis for $g$ intersects $\hh$ in a single point.

We say that a hyperbolic isometry $g\in\Aut(X)$ is \emph{parallel} to $\hh$ if any axis for $g$ is a bounded distance from $\hh$; and, a hyperbolic isometry is \emph{peripheral} to $\hh$ if it neither skewers $\hh$ nor is parallel to  $\hh$. In this case, any axis lies in a halfspace $\h$ bounded by the hyperplane $\hh$ and is not contained in any neighborhood of $\hh$. It follows that either $g\h^*\subset\h$
 or $g^2\h^*\subset\h$. 

\begin{definition}
Given a hyperbolic isometry $g\in\Aut(X)$, we define the \emph{skewer set of $g$}, denoted $sk(g)$, as the collection of all hyperplanes skewered by $g$.  
We define a \emph{disjoint skewer set} for $g$ as a collection of disjoint hyperplanes in $sk(g)$ which is invariant under $g^2$. 
\end{definition}

If $g$ is parallel to a hyperplane $\hh$, then any hyperplane in $sk(g)$ intersects $\hh$. Since there are no intersecting  triples of  hyperplanes in $X$, this means that no two hyperplanes in $sk(g)$ intersect. Furthermore, any two translates of $\hh$ under $<g>$ are parallel to $g$ and hence cross every hyperplane in $sk(g)$. Again, by the two dimensionality of $X$, this means that the two translates of $\hh$ under $<g>$ are disjoint. We record this observation, since we will make use of it. 

\begin{observation}
If $g$ is parallel to $\hh$, then all the hyperplanes in $sk(g)$ are disjoint and two distinct hyperplanes in the orbit of $\hh$ under $<g>$ are disjoint.
\label{ParallelMeansDisjoint}
\end{observation}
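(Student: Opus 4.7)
The observation has two assertions, and both reduce to the intermediate claim $(\ast)$: if $g$ is parallel to $\hh$ and $\hk\in sk(g)$, then $\hh$ and $\hk$ meet. My plan is to establish $(\ast)$ first and then deduce both parts of the observation from $(\ast)$ together with the two-dimensional fact already recorded in the section, namely that no three hyperplanes of $X$ pairwise cross.

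To prove $(\ast)$, I would argue by contradiction. Suppose $\hh\cap\hk=\emptyset$, so that $\hh$ lies in one halfspace of $\hk$, say $\hh\subset\k^*$. Since $g$ is parallel to $\hh$, there is $R>0$ with an axis $\gamma$ of $g$ contained in $N_R(\hh)$. Any point of $N_R(\hh)\cap\k$, joined to $\hh$ by a geodesic, must cross $\hk$, and hence lies within $R$ of $\hk$; therefore $\gamma\cap\k\subset N_R(\hk)$. On the other hand, because $g$ skewers $\hk$, the axis $\gamma$ meets $\hk$ at a single point and its forward ray lies in $\k$, with the nested sequence $g^{2n}\k\subsetneq\k$ driving the ray arbitrarily deep into $\k$ and thus out of every bounded neighborhood of $\hk$. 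This contradicts $\gamma\cap\k\subset N_R(\hk)$.

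With $(\ast)$ in hand, the two parts follow from the no-triple principle. For the first, given any $\hk_1,\hk_2\in sk(g)$, both cross $\hh$ by $(\ast)$; if they also crossed each other, then $\{\hh,\hk_1,\hk_2\}$ would be a pairwise-crossing triple, contradicting the two-dimensionality of $X$. For the second, conjugation shows that $g$ is parallel to $g^n\hh$ for every $n$, because $g$ preserves its own axis setwise. Since $g$ is hyperbolic, $sk(g)$ is non-empty, so fix $\hk\in sk(g)$; by $(\ast)$ applied to both $\hh$ and $g^n\hh$, the hyperplane $\hk$ crosses each. If in addition $\hh$ and $g^n\hh$ crossed, then $\{\hh,g^n\hh,\hk\}$ would again be a pairwise-crossing triple.

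The main obstacle I anticipate is $(\ast)$ itself: one must verify carefully that the forward ray of the axis really leaves every bounded neighborhood of $\hk$, which uses both the positive translation length of $g$ and the skewering characterization that $\gamma$ meets $\hk$ transversely at a single point. The rest is combinatorial bookkeeping against the 2-dimensional hyperplane obstruction.
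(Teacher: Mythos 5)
Your proposal is correct and follows essentially the same route as the paper: show that every hyperplane in $sk(g)$ (and every $\langle g\rangle$-translate of $\hh$) must cross a hyperplane to which $g$ is parallel, then invoke the absence of pairwise-crossing triples in a $2$-dimensional complex. The only difference is that you spell out the metric argument for the crossing claim $(\ast)$, which the paper asserts without proof.
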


\begin{lemma}
Let $g$ be a hyperbolic automorphism of $X$, then $sk(g)$ is a union of finitely many disjoint skewer sets. 
\end{lemma}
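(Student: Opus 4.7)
The plan is to realize $sk(g)$ as a finite union of $\langle g^2\rangle$-orbits and then verify that each such orbit is itself a disjoint skewer set. Choose a \cat axis $\alpha$ for $g$, which exists because $g$ is hyperbolic, and a fundamental domain $D\subset\alpha$ for the $\langle g^2\rangle$-action: a compact geodesic segment of length twice the translation length of $g$. As noted in the paper, a hyperplane $\hh$ lies in $sk(g)$ exactly when $\alpha$ meets $\hh$ in a single point. Because the translates $g^{2n}D$, $n\in\Z$, tile $\alpha$, every hyperplane in $sk(g)$ is a $\langle g^2\rangle$-translate of one crossing $D$; so if the set $H_D$ of hyperplanes meeting $D$ is finite, say $H_D=\{\hh_1,\dots,\hh_k\}$, then
\[ sk(g) \;=\; \bigcup_{i=1}^k \langle g^2\rangle\cdot\hh_i. \]

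The finiteness of $H_D$ follows from compactness of $D$ together with the fact that each cube in a $2$-dimensional \cat cube complex meets at most two hyperplanes: cover $D$ by finitely many relatively open neighborhoods, each contained in a single closed cube. (An alternative route invokes Haglund's theorem that every hyperbolic cubical automorphism admits a combinatorial axis, in which case a fundamental domain is a finite edge path dual to finitely many hyperplanes by construction.) It remains to check that each orbit $\langle g^2\rangle\cdot \hh_i$ is a disjoint skewer set. Pick a halfspace $\h_i$ bounded by $\hh_i$ with $g^2\h_i\subsetneq \h_i$, which exists because $g$ skewers $\hh_i$. Applying powers of $g^2$ yields the strictly nested bi-infinite chain $\cdots \subsetneq g^{2(n+1)}\h_i \subsetneq g^{2n}\h_i \subsetneq \cdots$, whose bounding hyperplanes $\{g^{2n}\hh_i\}_{n\in\Z}$ are therefore pairwise disjoint. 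Being $\langle g^2\rangle$-invariant by construction, this orbit is a disjoint skewer set.

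The main obstacle I anticipate is the finiteness of $H_D$ in the absence of a local finiteness hypothesis on $X$. The compactness argument above suffices in the $2$-dimensional case, but the cleaner route is to work with a combinatorial axis, where finiteness is tautological. All other steps reduce to unraveling definitions together with the elementary halfspace arithmetic already used elsewhere in the paper.
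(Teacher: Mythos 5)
Your proof is correct and follows essentially the same route as the paper: decompose $sk(g)$ into finitely many orbits of a hyperplane under a power of $g$ and use the nesting $g^2\h\subset\h$ coming from the definition of skewering to get pairwise disjointness. The only differences are cosmetic --- you use $\langle g^2\rangle$-orbits uniformly where the paper splits each $\langle g\rangle$-orbit into one or two pieces according to whether $g\hh\cap\hh=\emptyset$, and you supply the justification (compact fundamental domain on an axis meeting finitely many hyperplanes) for the finiteness of the number of orbits, which the paper simply asserts.
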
 

\begin{proof}
Consider  $\hh\in sk(g)$.  If $g\hh\cap\hh=\emptyset$, we let $P_1=\{g^n(\hh)\vert n\in\Z\}$.
Otherwise, since $X$ is 2-dimensional, we have $g^2\hh\cap\hh=\emptyset$. We then set $P_1=\{g^{2n}(\hh)\vert n\in\Z\}$ and $P_2=\{g^{2n+1}\hh\vert n\in\Z\}$.  Thus $P_1$ and $P_2$ break up the orbit of $\hh$ under $<g>$ into two disjoint skewer sets. Since there are finitely many orbits of hyperplanes in $sk(g)$ under the action of $<g>$, this breaks up $sk(g)$ into finitely many disjoint skewer sets.
\end{proof}

\begin{example}
Let $X$ denote the Euclidean plane, squared in the usual way by unit squares. Let $g$ be an integer translation in the vertical direction. Then the skewer set of $g$ is the collection of horizontal hyperplanes and the number of disjoint skewer sets depends on the translation length of $g$. \end{example}

\begin{example}
Again, let $X$ denote the Euclidean plane. Let $g$ be a glide reflection along the diagonal axis: $g(x,y)=(y+1,x+1)$. Then the skewer set of $g$ is a union of four disjoint skewer sets, each invariant under $g^2$. 
\end{example}
%************************************************************************************************************************
\section{The parallel subset of an element}

Given a hyperbolic $g\in\Aut(X)$, we describe  combinatorially a certain invariant subcomplex associated to $g$ which consists of all the lines parallel to axes in $G$.  (This subcomplex is discussed as well in \cite{HuangJankiewiczPrzytycki2015} and is slightly different than the minimal set of $G$, as described in \cite{Bridson99} or \cite{FernosForesterTao2016}.)

We consider the following partition of hyperplanes $\hH$ of $X$. Let 

\bigskip
$\hH_\parallel (g)=\{\hh\vert \hh \text{ intersects every hyperplane in } sk(g)\}$

\bigskip
$\hH_P(g) =\hH-(sk(g)\cup\hH_\parallel (g))$

Since the elements of $\hH_P(g)$ are peripheral to $g$, it follows that for each hyperplane $\hh\in\hH_P(g)$, there exists a well-defined halfspace $\h$ containing all the axes of $g$. Recall that the collection of cubes intersecting a hyperplane $\hh$ has a product structure $\hh\times [0,1]$. We let $N(\hh)=\hh\times(0,1)$. For a halfspace $\h$ we let $R(\h)=\h- N(\hh)$.

We define 

$$Y_g=\bigcap_{\ell_g\in\h \text{ and }\hh\in\hH_P(g)} R(\h)$$

The subspace $Y_g$ is a $<g>$-invariant convex subcomplex of $X$, and as $Y_g$ contains the axes of $g$, it is non-empty.

The hyperplanes intersecting $Y_g$ are the hyperplanes of  $sk(g)$ and $\hH_\parallel(g)$. Since $sk(g)$ and $\hH_\parallel(g)$ are transverse collections of  hyperplanes, we obtain (by \cite{CapraceSageev2011}) that $Y_g$ admits a product structure $Y_g\cong E_g \times T_g$, where $E_g$ is defined by the hyperplanes $sk(g)$ and $T_g$ is defined by the hyperplanes in $\hH_\parallel(g)$.  Note that  $sk(g)$ does not contain any disjoint facing triples of hyperplanes. As  $g$ does not skewer any hyperplane in $\hH_\parallel(g)$, $g$ fixes a vertex in $T_g$. 
Since $Y_g$ is 2-dimensional, there are two possibilities:

\begin{enumerate}
\item $E_g=\R$ and $T_g$ is isomorphic to a tree.
\item $E_g$ is 2-dimensional and $T_g$ is a point. 
\end{enumerate}

We call $Y_g$ the \emph{parallel set} of $g$ and $E_g$ its \emph{Euclidean factor.}

We need a further understanding of $E_g$ in order to conclude that groups that stabilize it have nice properties. 

\begin{lemma}
Let $E_g$ be the Euclidean factor of $Y_g$. Then either $E_g$ is a Euclidean plane or $E_g$ contains an $Aut(E_g)$-invariant line. 
\end{lemma}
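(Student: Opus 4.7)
The plan is to split on the structural dichotomy for $E_g$ established in the preceding paragraph: either $E_g=\R$ or $E_g$ is $2$-dimensional (with $T_g$ a point).

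If $E_g=\R$, the conclusion is immediate. Here $E_g$ is itself a line, and a line is trivially preserved setwise by every automorphism of itself, so $E_g$ is the desired $\Aut(E_g)$-invariant line.

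The substantive case is when $E_g$ is $2$-dimensional; here I would show that $E_g$ is a Euclidean plane, so that the first alternative of the conclusion holds. Since $T_g$ is a point we have $\hH_\parallel(g)=\emptyset$, so every hyperplane of $E_g$ lies in $sk(g)$ and is skewered by $g$. Using that $E_g = Y_g$ is the parallel set of the hyperbolic element $g$, I would invoke the CAT(0) parallel-set decomposition to write $E_g \cong \R \times Z$ with the $\R$-factor parametrizing an axis of $g$ and $Z$ a $1$-dimensional CAT(0) space, i.e., a metric tree, since $E_g$ is $2$-dimensional. The goal is to upgrade this to $Z\cong\R$. Suppose $Z$ had a branch point $v_0$; then three or more half-plane pages of $E_g$ meet along $\R\times\{v_0\}$, and by examining the cube-complex structure near a vertex on this line one produces combinatorial edges pointing into each page. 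Some such edge runs in the axis direction, and the hyperplane dual to it is then a $1$-dimensional subcomplex that crosses every hyperplane of $sk(g)$ (since the skewered hyperplanes are exactly those transverse to the axis), hence lies in $\hH_\parallel(g)$. This contradicts $\hH_\parallel(g)=\emptyset$, so $Z$ has no branching, whence $Z\cong\R$ and $E_g\cong\R\times\R$ is a Euclidean plane.

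The main obstacle is executing the branch-point contradiction rigorously: in case 2 the axis of $g$ need not be a combinatorial geodesic, so the CAT(0) branching of $Z$ need not align with the cube structure of $E_g$, and some care is needed to locate the hyperplane parallel to the axis direction. I expect to lean on the no-disjoint-facing-triples property of $sk(g)$ noted earlier in this section, together with the fact that $g$ acts as a combinatorial automorphism permuting the cube structure, so that any CAT(0) branching of $Z$ must ultimately be visible through a cube-complex hyperplane parallel to the axis of $g$, giving the required element of $\hH_\parallel(g)$.
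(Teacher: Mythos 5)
There is a genuine gap, and it sits exactly where the real content of the lemma lies. Your treatment of the $2$-dimensional case asserts that $E_g$ must then be a Euclidean plane, but this is false. Take the ``staircase'' subcomplex of the squared plane consisting of the squares $[i,i+1]\times[j,j+1]$ with $j\le i\le j+k$, and let $g$ be the diagonal glide $(x,y)\mapsto(y+1,x+1)$ (or take a diagonal strip $|i-j|\le k$ with $g$ a diagonal translation). Every hyperplane is skewered by $g$, so $\hH_\parallel(g)=\emptyset$, $T_g$ is a point, and $E_g$ is the whole $2$-dimensional staircase (resp.\ strip), which is not a plane. For such $E_g$ the correct conclusion is the \emph{second} alternative of the lemma, an $\Aut(E_g)$-invariant line, and your argument never produces one in the $2$-dimensional case because it has ruled that case out by fiat.

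Two specific steps fail. First, the splitting $E_g\cong\R\times Z$ you invoke is the parallel-set (minimal set) decomposition of the union of geodesics parallel to an axis; but $Y_g$ is a combinatorial convex subcomplex that is in general strictly larger than that union (in the staircase the union of parallels to the axis is a diagonal strip properly contained in $E_g$), so $E_g$ need not split as $\R\times Z$ at all. Second, even where such a splitting exists, ``$Z$ is a tree with no branch point'' only yields that $Z$ is a point, a compact segment, a ray, or a line; in the segment and ray cases $E_g$ is a strip or a staircase, and one must instead show that $\Aut(E_g)$ fixes a point in the (compact, or ray-like) space of lines parallel to the axis in order to extract an invariant line. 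That fixed-point argument is precisely what the paper's proof supplies: using the absence of facing triples it shows the hyperplanes of $E_g$ are all lines (whence a plane), all closed intervals (whence an invariant compact interval of parallel lines, hence a fixed line), or all rays (the staircase, where the lines at bounded distance from the ray endpoints form an invariant ray, which again has a fixed point). Your branch-point observation is compatible with this but only excludes branching; it does not decide among the remaining shapes, which is where the dichotomy of the lemma actually gets resolved.
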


\begin{proof}
See \cite{BrodzkiCampbellGuenterNibloWright03} or \cite{CapraceSageev2011}  for a discussion of ultrafilters, intervals and medians, which are used  in the following argument.
We claim first that $E_g$ is an \emph{interval complex}.  That is, there exist two ultrafilters $\alpha$ and $\beta$ on $\cH$ such that $\overline{E_g}=[\alpha,\beta]$ (where $\overline{E_g}$ denotes the ultrafilter closure of $E_g$). To see this, choose a point on an axis $\ell_g$  for $g$ and let $R^+$ and $R^-$ be the two subrays of $\ell_g$ defined by $p$. Define two ultrafilters
$$\alpha_+= \{\h\in\cH\vert R^+\cap \h \text{ is unbounded} \}$$
$$\alpha_-= \{\h\in\cH\vert R^-\cap \h \text{ is unbounded} \}$$
Note that since $\ell_g$ intersects every hyperplane of $E_g$, $\alpha_+$ and $\alpha_-$ are ultrafilters. Moreover, $\alpha_+$ and $\alpha_-$ make the opposite choices for each hyperplane, which is to say $\alpha_+\cap\alpha_-=\emptyset$. It follows that for every other ultrafilter $\beta$, we have that 

$$med(\alpha_+,\alpha_-,\beta)= (\alpha_+\cap\alpha_-)\cup(\alpha_+\cap\beta)\cup(\alpha_-\cap\beta)=\beta$$
This means that $\overline{E}_g=[\alpha_+,\alpha_-]$, as claimed.

It follows, by \cite{BrodzkiCampbellGuenterNibloWright03}, Theorem 1.16,  that $E_g$ embeds isometrically in the standard squaring of the Euclidean plane. We can thus assume that $E_g$ is an isometrically embedded subset of the standard squaring of the Euclidean plane. It follows that the hyperplanes in $E_g$ are either lines, rays or closed intervals. Since $g\in Aut(E_g)$ is a hyperbolic element, we also have that there are finitely many orbits of hyperplanes under the action of $Aut(E_g)$ on $E_g$. 

If all the hyperplanes are lines, then we obtain that $E_g$ is itself a Euclidean plane and we are done. If some hyperplane, say a horizontal one, is a ray, then we claim that all the other horizontal hyperplanes are rays. For if some horizontal hyperplane is a line, then by the fact that $g$ is acting cofinitely on the hyperplanes, we would obtain two horizontal line hyperplanes, separated by a horizontal ray hyperplane. This would contradict the fact that $E_g$ is isometrically embedded in the Euclidean plane. By the same reasoning, there can be no closed interval horizontal hyperplanes, for we would obtain two ray intervals a bounded Hausdorff distance apart in $E_g$ separated by a closed interval hyperplane. From this it follows that all the vertical hyperplanes are rays as well and we have that $E_g$ is a  "staircase", as in Figure \ref{Staircase}. 

\begin{figure}[h]
\includegraphics{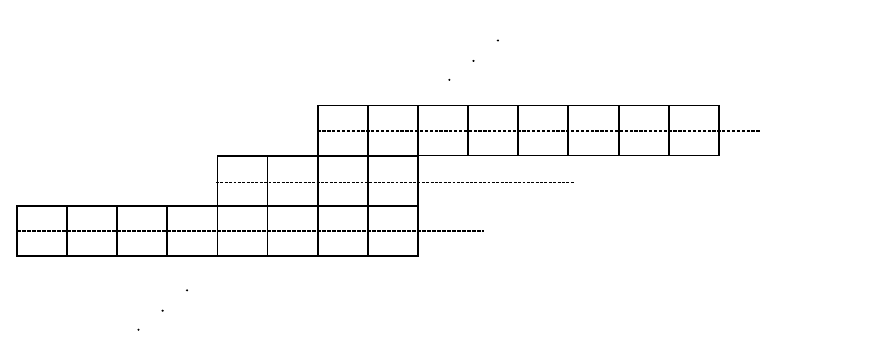}
\caption{The case in which all hyperplanes in $E_g$ are rays. The endpoints of the rays are invariant, and hence any line in $E_g$ a bounded distance from all endpoints is 
$\Aut(E_g)$-invariant}
\label{Staircase}
\end{figure}

In this "stairstep" case, the space of lines which coarsely contains the endpoints of the hyperplanes is itself a ray $R$ which is $\Aut(E_g)$-invariant, hence there is an $\Aut(E_g)$ fixed point in $R$ and hence an $\Aut(E_g)$-invariant line in $E_g$.

 If there exists a hyperplane in $E_g$ which is a closed interval, then by similar considerations as above, we may conclude that all hyperplanes are closed intervals. Since $<g>$ acts cocompactly on $E_g$, it follows all lines in $E_g$ are parallel and the space of such lines is a compact interval $I$. Since the action of $\Aut(E_g)$ on $I$ has a fixed point, it then follows that there is an $\Aut(E_g)$-invariant line. 
\end{proof}
%************************************************************************************************************************

\section{The ping-pong lemma and hyperplane patterns that yield free semigroups}

We will use the following version of the Ping Pong Lemma (see, for example, \cite{deLaHarpe2000})

\begin{lemma}[Semigroup Ping Pong] Suppose that a group $G$ is acting on a set $X$ and $U, V$ are \emph{disjoint} subsets of $X$. The elements $a, b \in G \backslash \{1\}$ satisfy  
\begin{itemize} 
\item $a(U \cup V)\subset U$
\item $b(U \cup V) \subset V$ 
\end{itemize}

\noindent Then $a$ and $b$ freely generate a free subsemigroup in $G$. 
\label{SemigroupPingPong}
\end{lemma}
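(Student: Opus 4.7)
The plan is to run the standard ping-pong argument adapted to positive words in $a$ and $b$. The key preliminary observation is the following: for any nonempty positive word $w = x_1 x_2 \cdots x_n$ with each $x_i \in \{a, b\}$ and any $p \in U \cup V$, the image $w \cdot p$ lies in $U$ if $x_1 = a$ and in $V$ if $x_1 = b$. I would prove this by induction on $n$. The base case $n = 1$ is immediate from the hypotheses $a(U \cup V) \subset U$ and $b(U \cup V) \subset V$. For $n > 1$, the inductive hypothesis applied to the suffix $x_2 \cdots x_n$ places $x_2 \cdots x_n \cdot p$ in $U \cup V$, and then a single application of $x_1$ pushes the result into $U$ or $V$ according to whether $x_1 = a$ or $x_1 = b$.

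Next, suppose for contradiction that $a$ and $b$ do not freely generate a free subsemigroup, so that there are distinct positive words $w_1 \neq w_2$ in $a, b$ representing the same element of $G$. I would argue by induction on $|w_1| + |w_2|$. If $w_1$ and $w_2$ have different first letters, then picking any $p \in U \cup V$ (which is nonempty, since otherwise the hypotheses are vacuous), the observation above places $w_1 \cdot p \in U$ and $w_2 \cdot p \in V$, contradicting both $U \cap V = \emptyset$ and $w_1 \cdot p = w_2 \cdot p$. If they share a first letter $x$, write $w_1 = x w_1'$ and $w_2 = x w_2'$; since $G$ is a group, left cancellation gives $w_1' = w_2'$ in $G$ with $w_1' \neq w_2'$ as words. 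If both $w_1'$ and $w_2'$ are nonempty, apply the induction hypothesis. Otherwise exactly one is the empty word, so some nonempty positive word $w'$ acts as the identity on $X$; then for every $p \in U \cup V$ we have $p = w' \cdot p$ lying in whichever of $U, V$ corresponds to the first letter of $w'$, which forces the other of $U, V$ to be empty. But then applying the other inclusion hypothesis to $U \cup V$ (which equals the nonempty set) lands inside the empty one, forcing $U \cup V = \emptyset$ and contradicting our choice of $p$.

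The argument is essentially routine, and there is no serious obstacle. The two points that merit a little care are the appeal to the group structure of $G$ to justify left cancellation when the two words share a prefix, and the bookkeeping in the degenerate boundary case where successive cancellation reduces one word to the empty word. Everything else is a direct appeal to the key observation and the disjointness of $U$ and $V$.
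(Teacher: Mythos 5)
Your proof is correct and follows essentially the same route as the paper's: use left cancellation in $G$ to reduce to two positive words with different first letters, then observe that one sends $U\cup V$ into $U$ and the other into $V$, contradicting disjointness. You are in fact somewhat more careful than the paper, which silently skips the degenerate case where one word is a prefix of the other (so that cancellation produces the empty word) and also leaves implicit the standing assumption that $U\cup V\not=\emptyset$; both points are handled explicitly in your argument.
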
 

\begin{proof}
Let $\Sigma$ be the semigroup generated by $a$ and $b$ in $G$. Observe that for any $g,h \in \Sigma \subset G$, $ag=ah$ or $bg=bh$ in $\Sigma$ if and only if $g=h$ in $\Sigma$. Therefore, it is enough to check that two words of the form $ag$ and $bh$ cannot be equal in $\Sigma$. But, $ag(U \cup V) \subset U$ and $bh(U\cup V) \subset V$. Since, $U \cap V = \emptyset$, $ag \neq bh$. 
\end{proof}

\subsection{On groups acting on trees}

To warm up, and to record a few observations we use later on, we first explore what happens for a pair of hyperbolic isometries acting on a tree. We include the proofs here because we will need these type of arguments. However, this is not new. See, for example, \cite{AlperinNoskov2002}. Let $T$ be a simplicial tree. Recall if an element $g$ of $Aut(T)$ is hyperbolic then there is a unique geodesic  $\ell_g$ (called the axis of $g$) which is invariant under $g$ on which $g$ induces a translation.

\begin{proposition}
If $a$ and $b$ are two hyperbolic automorphisms of a tree $T$, then one of the following occurs: 
\begin{itemize}
\item $a, b$ share the same axis,
\item $a^{\pm 1}$ and $b^{\pm 1}$ freely generate a free semigroup. 
\end{itemize}
\label{TreeCase}
\end{proposition}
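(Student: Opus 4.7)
The plan is to apply the Semigroup Ping Pong Lemma (Lemma~\ref{SemigroupPingPong}) with carefully chosen subsets $U,V\subset T$ and suitable signs $\epsilon_a,\epsilon_b\in\{\pm 1\}$. Assume we are not in case (1), so $\ell_a\neq\ell_b$. Since two bi-infinite geodesics in a tree sharing both endpoints at infinity must coincide, the two axes have different ends; in particular, at any point where $\ell_a$ meets (or is closest to) $\ell_b$, the axis $\ell_a$ has a well-defined ``outward'' direction pointing away from $\ell_b$.

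First I would choose separating points. If $\ell_a\cap\ell_b=\emptyset$, let $[p_a,p_b]$ be the bridge, with $p_a\in\ell_a$ and $p_b\in\ell_b$. If instead $\ell_a\cap\ell_b$ is a (possibly degenerate) segment with endpoints $p$ and $q$, take $p_a=p_b=q$. I would then choose $\epsilon_a$ so that the attracting end of $a^{\epsilon_a}$ lies on the outward side of $p_a$ along $\ell_a$, and analogously for $\epsilon_b$. Define $U$ to be the connected component of $T\setminus\{p_a\}$ containing the attractor of $a^{\epsilon_a}$, and $V$ the component of $T\setminus\{p_b\}$ containing the attractor of $b^{\epsilon_b}$. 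The sets $U$ and $V$ are disjoint, since $U$ lies in the outward subtree at $p_a$ and is separated from $V$ by the bridge or the shared overlap.

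To verify the ping-pong conditions, first note that $a^{\epsilon_a}$ translates $\ell_a$ by its translation length toward the attractor, so $a^{\epsilon_a}(p_a)$ lies strictly in $U$, which immediately gives $a^{\epsilon_a}(U)\subset U$. For $v\in V$, the nearest-point projection of $v$ to $\ell_a$ equals $p_a$, because any path from $v$ to $\ell_a\setminus\{p_a\}$ must pass through $p_a$. By equivariance of projection under the isometry $a^{\epsilon_a}$, the nearest point on $\ell_a$ to $a^{\epsilon_a}(v)$ is $a^{\epsilon_a}(p_a)$. Hence the geodesic from $a^{\epsilon_a}(v)$ to $p_a$ first reaches $a^{\epsilon_a}(p_a)$ and then proceeds along $\ell_a$ on the attractor side back to $p_a$, so it arrives at $p_a$ from the attractor direction. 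Thus $a^{\epsilon_a}(v)\in U$, giving $a^{\epsilon_a}(V)\subset U$. A symmetric argument yields $b^{\epsilon_b}(U\cup V)\subset V$, and the Semigroup Ping Pong Lemma then concludes that $a^{\epsilon_a}$ and $b^{\epsilon_b}$ freely generate a free semigroup.

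The main subtlety is the sign choice. With the ``wrong'' signs --- for instance, the attractors on opposite ends of a long shared overlap --- a single application of $a^{\epsilon_a}$ can fail to push $V$ into $U$ when the translation length is shorter than the overlap. The choice described above places both attractors on the same outward side of the shared region of the two axes, so that one translation already moves the separating point strictly into $U$. The existence of such a choice, and thus the dichotomy in the proposition, is guaranteed precisely by $\ell_a\neq\ell_b$.
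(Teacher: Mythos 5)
Your proof is correct and follows essentially the same strategy as the paper: separate the tree at a bifurcation point of the two axes (or at the bridge), choose the signs $\epsilon_a,\epsilon_b$ so that both elements translate past the separating point into divergent branches, and apply the semigroup ping pong lemma to those branches. The only cosmetic differences are that you delete a vertex rather than the interior of an edge, and in the disjoint-axes case you use a single branch for each of $U$ and $V$ where the paper takes $U=\bigcup_{n>0}a^nT_q$ and $V=\bigcup_{n>0}b^nT_p$; both choices satisfy the ping pong hypotheses.
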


\begin{proof}  
Suppose that $\ell_a\not=\ell_b$. First assume that $\ell_a\cap \ell_b$ is non-empty and contains an edge $e=[p,q]$. (See Figure \ref{Tree}.) Choose $e$ so that $q$ is a point of bifurcation of $\ell_a$ and $\ell_b$. Let $T_q$ be the component of $T-interior(e)$ containing $q$. After possibly replacing $a$ by $a^{-1}$ and/or $b$ by $b^{-1}$, we see that $ae \subset T_q$ and $be\subset T_q$. Set $U=aT_q$ and $V=bT_q$. Then $U, V$ satisfy the hypothesis of Lemma \ref{SemigroupPingPong}. We will generalize this argument in our context.  

 %Let $p$ be a vertex of bifurcation of the two axes $\ell_a$ and $\ell_b$. Let $e$ be the edge emanating from the vertex $p$ that is contained in $\ell_a - \ell_b$; let $e'$ be the edge at $p$ that is contained in $\ell_b - \ell_a$. We choose $U$ to be the connected component of $T - \{\textrm{interior}(e)\}$ containing the vertex adjoining $e$ and different from $p$ (and hence intersecting $\ell_a$). We choose $V$ to be the connected component of $T - \{\textrm{interior}(e')\}$ containing the vertex adjoining $e'$ different from $p$ (and hence intersecting $\ell_b$). Then $U$ and $V$ are disjoint non-empty subtrees of $T$. Taking inverses if needed, we see that $a(U\cup V) \subseteq U$ and $b(U \cup V) \subset V$. 

\begin{figure}[h]
\includegraphics{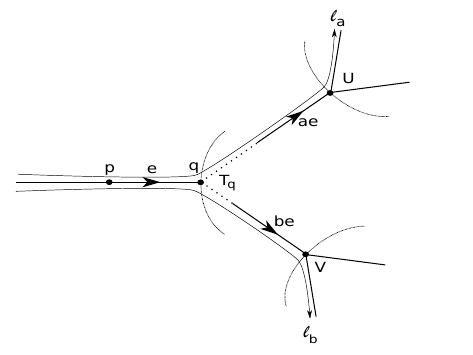}
\caption{The hyperbolic isometries $a$ and $b$ have non-equal, but overlapping axes.}
\label{Tree}
\end{figure}

\begin{figure}[h]
\includegraphics{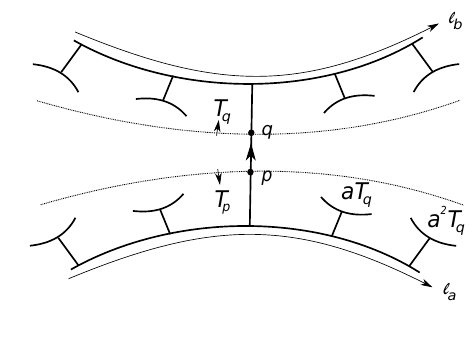}
\caption{The hyperbolic isometries $a$ and $b$ have disjoint axes.}
\label{Tree2}
\end{figure}

The case when $\ell_a\cap\ell_b=\emptyset$ calls for a different argument (See Figure \ref{Tree2}). Consider an edge $e=[p,q]$ situated along the geodesic arc joining $\ell_a$ and $\ell_b$. Let $T_p$ be the component of $T-\textrm{interior}(e)$ containing $p$ and $T_q$ be the component of $T-\textrm{interior}(e)$ containing $q$. Suppose (without loss of generality) that $\ell_a\subset T_p$ and $\ell_b\subset T_q$.  Then letting $U=\bigcup_{n>0} a^nT_q$ and $V=\bigcup_{n>0} b^n T_p$, we see that $a(U\cup V)\subset U$ and $b(U\cup V)\subset V$, as required. In fact, in this case, we can argue that $a$ and $b$ generate a free group by adjusting $U$ and $V$ to include all non-zero powers of $a$ and $b$, but we will not need this fact. 
Note that there is a singular case in which $\ell_a$ and $\ell_b$ intersect in a single point. In this case, we simply use the intersecting vertex to separate $T$ into two subtrees, each containing a different axis, and proceed in the same manner. 
\end{proof}

\subsection{Back to \cat cube complexes}

The following Lemma works in any dimension and so, just for the paragraph below, we let $X$ be an $n$-dimensional \cat cube complex.

\begin{lemma}
Let $g_1,g_2\in \Aut(X)$ and suppose that there exists a halfspace $\h$ of $X$ such that $g_i\h\subset \h$ and $g_1\h \subset g_2\h^*$.  Then $g_1,g_2$ generate a free semigroup.
\label{FreeSG1}
\end{lemma}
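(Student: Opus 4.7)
The plan is to apply the Semigroup Ping Pong Lemma (Lemma \ref{SemigroupPingPong}) directly, with the obvious candidates $U = g_1\h$ and $V = g_2\h$. The three hypotheses are exactly the three ingredients needed: the two invariance conditions $g_i\h\subset\h$ will give the ping-pong inclusions, and the separation condition $g_1\h\subset g_2\h^*$ will give the disjointness of $U$ and $V$.

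First I would check disjointness. From $g_1\h\subset g_2\h^*$ we get $U\subset g_2\h^*$, while $V=g_2\h$; since $g_2\h$ and its complementary halfspace $g_2\h^*$ are disjoint (being the two complementary halfspaces bounded by the hyperplane $g_2\hh$), we conclude $U\cap V=\emptyset$.

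Next, I would verify the two ping-pong inclusions. Applying $g_1$ to the hypothesis $g_1\h\subset\h$ gives $g_1^2\h\subset g_1\h = U$, and applying $g_1$ to $g_2\h\subset\h$ gives $g_1 g_2\h\subset g_1\h = U$; together these say $g_1(U\cup V)\subset U$. A symmetric calculation, applying $g_2$ to each of $g_1\h\subset\h$ and $g_2\h\subset\h$, yields $g_2(U\cup V)\subset V$. With disjointness and the two inclusions in hand, Lemma \ref{SemigroupPingPong} immediately gives that $g_1$ and $g_2$ freely generate a free semigroup.

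There is no serious obstacle here: the lemma is essentially a packaging statement that isolates the combinatorial configuration of halfspaces under which ping-pong succeeds, and the proof amounts to choosing $U,V$ to be the two disjoint pushed-forward halfspaces $g_1\h$ and $g_2\h$. The only point worth being careful about is the (standard) fact that complementary halfspaces in a \cat cube complex are disjoint, so the hypothesis $g_1\h\subset g_2\h^*$ genuinely delivers $U\cap V=\emptyset$; everything else is a one-line application of $g_i$ to a containment.
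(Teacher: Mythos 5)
Your proof is correct and is exactly the paper's argument: the paper also sets $U=g_1\h$, $V=g_2\h$ and applies Lemma \ref{SemigroupPingPong}, merely without writing out the verifications you supply. The extra details you give (disjointness from $g_1\h\subset g_2\h^*$, and the inclusions from $U\cup V\subset\h$) are the right ones and check out.
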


\begin{proof} 
This argument resembles the first case in the proof of Proposition \ref{TreeCase}. Set $U=g_1\h$ and set $V=g_2\h$ and apply Lemma \ref{SemigroupPingPong}
\end{proof}

\noindent \emph{We call the triple $\{\h, g_1\h, g_2\h\}$ a \emph{ping pong triple} for $g_1$ and $g_2$.}

\section{Main argument}

Now, let $X$ be a \cat square  complex.

\begin{lemma}[All or nothing]
Let $a$ and $b$ be hyperbolic isometries of $X$ and let $P$ be a disjoint skewer set for $a$. Suppose that no pair of words of length at most $6$ in $a$ and $b$   generate a free semigroup, then 
either $b$ skewers every hyperplane in $P$ or $b$ does not skewer any hyperplane in $P$. 
\label{AllOrNothing}
\end{lemma}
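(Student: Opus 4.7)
I would prove the contrapositive: assume $b$ skewers some $\hh_1 \in P$ but not some $\hh_2 \in P$, and exhibit two words in $a,b$ of length at most $6$ that freely generate a free semigroup.

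First I would rule out the possibility that $b$ is parallel to $\hh_2$: by Observation~\ref{ParallelMeansDisjoint}, that would force every hyperplane in $sk(b)$ to meet $\hh_2$, but $\hh_1 \in sk(b)$ is disjoint from $\hh_2$ (both lie in the disjoint collection $P$). Hence $b$ is peripheral to $\hh_2$. Let $\h_2$ be the halfspace of $\hh_2$ containing an axis $\ell_b$ of $b$, so that $b^2\h_2^*\subset \h_2$. Since $\ell_b \subset \h_2$ but crosses $\hh_1$, we must have $\hh_1 \subset \h_2$. Replacing $a$ by $a^{-1}$ (a length-preserving substitution) if necessary, assume $a^2\h_2\subset \h_2$; letting $\h_1$ be the halfspace of $\hh_1$ containing $\hh_2$, this forces $a^2\h_1^*\subset \h_1^*$. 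Replacing $b$ by $b^{-1}$ if needed, also assume $b^2\h_1^*\subset \h_1^*$.

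Now I would split on the interaction of $a$ with $\hh_2$. Given $a^2\h_2\subset \h_2$, there are two possibilities: either $a\h_2\subset \h_2$ (``translation-type'') or $a\h_2\subset \h_2^*$ and $a\h_2^*\subset \h_2$ (``flip-type''). In the flip-type case the argument is clean: set $g_1=b$ and $g_2=aba^{-1}$. The axis of $g_2$ is $a\ell_b \subset a\h_2 \subset \h_2^*$, lying on the opposite side of $\hh_2$ from $\ell_b$. I would verify that $g_2$ is peripheral to $\hh_2$ (it does not skewer $\hh_2$ since that would require $b$ to skewer $a^{-1}\hh_2\subset \h_2^*$, unreachable by $\ell_b$; and it is not parallel, because $a\hh_1 \in sk(g_2)$ lies in $\h_2^*$, disjoint from $\hh_2$, contradicting the parallel characterization). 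Lemma~\ref{FreeSG2} applied at $\hh_2$ then yields exponents $\epsilon_1,\epsilon_2\in\{1,2\}$ with $b^{\epsilon_1}$ (length $\le 2$) and $(aba^{-1})^{\epsilon_2} = aba^{-1}$ or $ab^2a^{-1}$ (length $\le 4$) freely generating a free semigroup---both within the budget of $6$.

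In the translation-type case, conjugating $b$ by any power of $a$ keeps the axis on the same side of $\hh_2$, so the opposite-side trick is unavailable. Here I would instead apply Lemma~\ref{FreeSG1} with the common attracting halfspace $\h_1^*$: both $a^2$ and $b^2$ preserve it, so if the attractors $a^2\h_1^*$ and $b^2\h_1^*$ happen to be disjoint, the pair $(a^2,b^2)$ (total length $4$) works directly. If instead one nests properly inside the other, I would replace one of the elements by a conjugate such as $a^2b^2a^{-2}$ or $b^2a^2b^{-2}$ (length $6$) so as to produce disjoint attracting halfspaces within $\h_1^*$, using $2$-dimensionality (no facing triples, no three mutually-crossing hyperplanes) to control the combinatorial possibilities.

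\emph{Main obstacle.} The difficult part is the translation-type case: producing a concrete length-$\le 6$ ping-pong pair \emph{uniformly}, with no a priori bound on how many $a^2$-steps separate $\hh_1$ from $\hh_2$ inside $P$. The two available tools, Lemmas~\ref{FreeSG1} and~\ref{FreeSG2}, have to be matched against the nesting configuration of the candidate hyperplanes $a^{\pm2}\hh_1, b^{\pm 2}\hh_1$ inside $\h_1^*$, and I expect the uniform length bound to come from the $2$-dimensional constraints on how these four hyperplanes can mutually arrange themselves, possibly with a further split on whether $b$ is order-$1$ or order-$2$ peripheral to $\hh_2$.
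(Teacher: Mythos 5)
Your proposal has a genuine gap, and it sits exactly where you flagged it: the ``translation-type'' case, which is the generic one. The idea you are missing is the paper's normalization of \emph{which} hyperplane of $P$ to work with. Because the hyperplanes of $P$ are pairwise disjoint and all crossed by an axis of $a$, they are nested, and the ones skewered by $b$ form a proper, nonempty interval in this order; so one can choose $\h$ with $a^2\h\subset\h$ such that $b$ skewers $\hh$ but \emph{not} $a^2\hh$. With this choice the skewered and non-skewered hyperplanes are adjacent in the $a^2$-orbit, i.e.\ the length-$2$ word $a^2$ already carries one to the other, and the uniform bound falls out: after arranging $b^2\h\subset\h$, the element $b$ is peripheral to $a^2\hh$ with axis in $a^2\h^*$, so $b^{2k}a^2\hh\cap a^2\hh=\emptyset$ for some $k\in\{1,2\}$, and $\{\h,\,a^2\h,\,b^{2k}a^2\h\}$ is a ping pong triple for $a^2$ and $b^{2k}a^2$ --- words of length $2$ and at most $6$. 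Your version fixes arbitrary $\hh_1,\hh_2$ and therefore has to fight the unbounded number of $a^2$-steps between them; the patch you propose (making the attractors $a^2\h_1^*$ and $b^2\h_1^*$ disjoint, else conjugating) cannot work as described, because that step never uses the hypothesis that $b$ fails to skewer $\hh_2$: two translations of the squared plane crossing $\hh_1$ in the same direction have nested attracting halfspaces that no conjugation will separate. Whatever completes your argument must reintroduce the peripherality of $b$ to $\hh_2$, and that is precisely what the adjacency normalization does.

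Two smaller issues. First, your dichotomy ``$a\h_2\subset\h_2$ or $a\h_2\subset\h_2^*$'' is not exhaustive: $a\hh_2$ may \emph{cross} $\hh_2$ (e.g.\ a diagonal glide reflection of the squared plane), in which case $a\ell_b$ need not lie in $\h_2^*$ and your flip-type application of Lemma~\ref{FreeSG2} does not apply; the paper avoids this by working with $a^2$ throughout. Second, your flip-type argument itself is sound and is a nice variant (it mirrors the conjugation step the paper uses later, in Proposition~\ref{MainProp}, rather than in this lemma), but since it only covers a special case it does not reduce the burden on the unresolved one. The preliminary reductions --- ruling out parallelism via Observation~\ref{ParallelMeansDisjoint} and orienting the halfspaces --- are correct and match the paper.
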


\begin{proof} 
Recall that for any $\hh$ in $sk(a)$, there exists an associated halfspace $\h$ such that $a^2\h\subset\h$. If $b$ skewers some element in $P$, but not all, we may also choose $\h$ such that  $\h$  is skewered by $b$ but $a^2\h$ is not skewered by $b$. After replacing $b$ possibly by $b^{-1}$, we may assume  that $b^2\h\subset h$. Note that $b$ and hence $b^2$  is peripheral to $a^2\hh$. 

Now by the 2-dimensionality of $X$, either $b^2 a^2 \hh\cap a^2\hh=\emptyset$ or $b^4a^2\hh\cap a^2\hh=~\emptyset$. We further have that $b^2a^2\h\subset b^2\h\subset \h$ and $b^4a^2\h\subset b^4\h\subset\h$. 

We thus have that either $\{\h, a^2\h,b^2a^2\h\}$ or $\{\h,a^2\h,b^4a^2\h\}$ is a ping pong triple of halfspaces for the pairs $a^2,b^2a^2$ or $a^2,b^4a^2$. See Figure \ref{allnone}. In either case, we obtain words of length at most 6 freely generating a free semigroup, a contradiction.

\begin{figure}[h]
\includegraphics{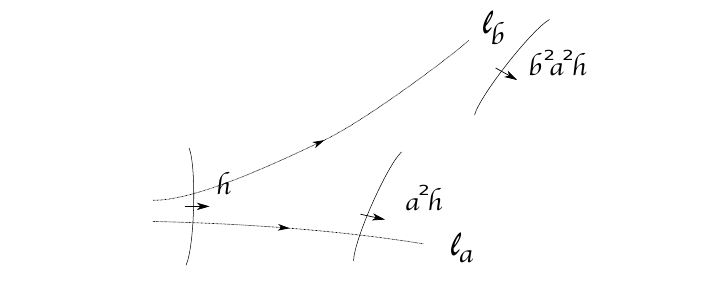}
\caption{The element $b$ skewering $\h$ but not $a\h$. }
\end{figure}
\label{allnone}
\end{proof}

\begin{proposition}[Not skewering means parallel]
Let $a$ and $b$ be hyperbolic isometries of $X$ and let $P$ be a disjoint skewer set for $a$. Let $\ell_b$ be an axis for $b$. Suppose that $b$ does not skewer any element of $P$ and that no pair of words of length no more than $10$ freely generate a free semigroup. Then
\begin{enumerate}
\item the  axis $\ell_b$ is parallel to every hyperplane $\hh\in P$. 
\item $bP\in sk(a)$
\item $b^2$ stabilizes every hyperplane in $P$. 
\end{enumerate}
\label{MainProp}
\end{proposition}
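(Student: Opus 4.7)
The plan is to establish each of (1), (3), (2) by contradiction, arguing in each case that failure yields two short words generating a free semigroup via Lemma~\ref{FreeSG1} or Lemma~\ref{FreeSG2}.

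For (1), I would assume $b$ is peripheral to some $\hh\in P$ with $\ell_b\subset\h$. Writing $\hh_k=a^{2k}\hh$ and $\h_k=a^{2k}\h$, these halfspaces form a strictly decreasing chain with empty intersection, and since $b$ skewers no element of $P$, there is a largest $K\geq 0$ with $\ell_b\subset\h_K$; thus $\ell_b\subset S_K:=\h_K\cap\h_{K+1}^*$. Consequently the axis $a^2\ell_b$ of the conjugate $a^2ba^{-2}$ lies in $a^2S_K=S_{K+1}\subset\h_{K+1}$, on the opposite side of $\hh_{K+1}$ from $\ell_b$. If $b$ is peripheral to $\hh_{K+1}$, Lemma~\ref{FreeSG2} applied to $b$ and $a^2ba^{-2}$ with respect to $\hh_{K+1}$ yields exponents $\epsilon_1,\epsilon_2\in\{1,2\}$ such that $b^{\epsilon_1}$ and $(a^2ba^{-2})^{\epsilon_2}=a^2b^{\epsilon_2}a^{-2}$ generate a free semigroup, of $\{a,b\}$-word length at most $2$ and $6$ respectively---contradicting the hypothesis. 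The alternative subcase, where $b$ is only parallel to $\hh_{K+1}$, I would exclude by invoking the product structure of the slab between two disjoint parallel hyperplanes in a \cat cube complex: a line in such a slab sits at bounded distance from both boundary hyperplanes, and by iteration across the chain of slabs $S_0,\ldots,S_K$ the axis $\ell_b$ would be at bounded distance from $\hh=\hh_0$, contradicting peripherality.

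For (3), having established (1), $b$ is parallel to every $\hh\in P$. Suppose for contradiction $b^2\hh\neq\hh$ for some $\hh\in P$. Observation~\ref{ParallelMeansDisjoint} gives that distinct $\langle b\rangle$-translates of $\hh$ are pairwise disjoint, so $\hh$ and $b^2\hh$ are disjoint parallel hyperplanes; orient halfspaces so that $b^2\h\subset\h$. Combined with $a^2\h\subset\h$, the hyperplanes $a^2\hh$ and $b^2\hh$ both lie in $\h$, and by 2-dimensionality of $X$ are either nested or cross. In the nested case, $\{\h,a^2\h,b^2\h\}$ is a ping-pong triple and Lemma~\ref{FreeSG1} produces a free semigroup on $a^2,b^2$ with generators of length~$2$; in the crossing case a small modification using $b^2a^2$ in place of $b^2$ still yields generators of length at most~$4$. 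Either contradicts the hypothesis.

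For (2), fix $\hh\in P$ and suppose $a$ does not skewer $b\hh$. By (3), $b^2$ stabilizes both $\hh$ and $b\hh$. Then $a$ is parallel or peripheral to $b\hh$, and applying the arguments of (1) and (3) with $b\hh$ in place of $\hh$---using that conjugation by $b$ identifies the local combinatorial structure near $\hh$ with that near $b\hh$---one produces short free-semigroup generators, again contradicting the hypothesis. The main obstacle is the subcase in (1) where $b$ is only parallel (not peripheral) to $\hh_{K+1}$: ruling this out depends on the product slab structure in a 2D \cat cube complex, and the $7$-letter length budget (versus the $6$ in Lemma~\ref{AllOrNothing}) is exactly what the additional $a^2(\cdot)a^{-2}$ conjugation in the ping-pong construction requires.
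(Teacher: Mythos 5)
Your overall strategy (reduce everything to a ping-pong triple via Lemma~\ref{FreeSG1} or Lemma~\ref{FreeSG2}, after locating $\ell_b$ in a slab $\h_K\cap\h_{K+1}^*$ between consecutive hyperplanes of $P$) is the same as the paper's, and your ``peripheral'' subcase is essentially the paper's first case. But there is a genuine gap at exactly the point where the real work of this proposition lies: the mixed case in which $b$ is parallel to one bounding hyperplane of the slab and peripheral to the other. You dispose of it by asserting that a line lying between two disjoint hyperplanes in a \cat square complex is at bounded distance from both of them. That is false: the intersection of two halfspaces $\h_K\cap\h_{K+1}^*$ is just a convex subcomplex with no product structure (the product structure $\hh\times[0,1]$ is a property of the carrier of a \emph{single} hyperplane), and it can be arbitrarily wide. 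For instance, in $T\times\R$ with $T$ a tree, take $\hh_K, \hh_{K+1}$ to be two vertical hyperplanes $e_1\times\R$, $e_2\times\R$; a vertical axis $\{v\}\times\R$ with $v$ deep inside the subtree between $e_1$ and $e_2$ lies in the slab but is far from both walls. So ``parallel to $\hh_{K+1}$, peripheral to $\hh_K$'' genuinely occurs, and the paper spends the bulk of its proof on it: it shows $b^2\hh=\hh$ would give a length-$6$ ping-pong triple $\{\h,c\h,b^{2k}c\h\}$, then analyzes whether $b^\ep\hh$ misses or crosses $c\hh$, and in the crossing case must bring in a hyperplane $\hk\in sk(b)$ and the conjugates $c^{-1}b^{-1}c$ and $b^\ep c^{-1}b^{-1}c$ --- this is where the length bound $7$ actually comes from, not (as you suggest) from the $a^2(\cdot)a^{-2}$ conjugation in the peripheral--peripheral case, which only costs $6$.

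Two further, smaller problems. First, in your peripheral subcase, Lemma~\ref{FreeSG2} requires \emph{both} $b$ and $a^2ba^{-2}$ to be peripheral to $\hh_{K+1}$; the latter is equivalent to $b$ being peripheral to $\hh_K$, which you never establish --- so your case split must be on the pair $(\hh_K,\hh_{K+1})$, not on $\hh_{K+1}$ alone (this is precisely why the mixed case above cannot be avoided). Second, in part (3) your trichotomy for $a^2\hh$ versus $b^2\hh$ is off: it is the \emph{facing} (disjoint, non-nested) configuration that yields the ping-pong triple $\{\h,a^2\h,b^2\h\}$, not the nested one, and the genuinely nested configuration is left unhandled; the paper instead derives $b^2\hh=\hh$ from the slab position of $\ell_b$ (using $b\hh,b^2\hh\subset c\h^*$ and the triple $\{c\h^*,\h^*,b^\ep\h^*\}$) and then propagates along $P$ by induction, which also gives part (2) via Lemma~\ref{AllOrNothing} plus one more explicit small triple rather than the appeal to ``the arguments of (1) and (3) with $b\hh$ in place of $\hh$'' that you leave unspecified.
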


\begin{proof} The disjoint skewer set $P$ decomposes as a finite union of $\langle a^2 \rangle$-orbits. So, the assumption that $b$ does not skewer any hyperplane in $P$ holds for each orbit. If the conclusion of the Proposition holds for each $\langle a^2 \rangle$-orbit, then it holds for all of $P$. Therefore it suffices to prove the Proposition for when $P$ is a single $\langle a^2 \rangle$-orbit: there exists $\h\in P$ such that $a^2\h\subset\h$ and  $P=\{a^{2k}\hh\vert k\in\Z\}$. We set $c=a^2$. Since $b$ does not skewer any hyperplane in $P$, we may assume that $\ell_b\subset \h\cap c\h^*$. (We are using here that the action is without inversions, so that if $\ell_b\subset \hh$ for some hyperplane, there is a parallel axis for $b$ on either side of $\hh$.) We will now use our assumptions to remove the possibility that $b$ is peripheral to $\hh$ or $c\hh$. 

First, suppose $b$ is peripheral to both $\hh$ and $c\hh$. We claim that we can  find a facing triple of hyperplanes of the form $\hh,b^s\hh,b^t\hh$ with $|s|,|t|\leq 4$. 

To see this, consider the 6 translates $\{b^{-2}\hh,b^{-1}\hh,\hh,b\hh,b^2\hh, b^3\hh \}$. Construct the intersection graph $\Gamma$ for these six hyperplanes: the vertices of $\Gamma$ are the elements of $\{b^{-2}\hh,b^{-1}\hh,\hh,b\hh,b^2\hh, b^3\hh \}$, and two vertices are joined by an edge if and only if the respective hyperplanes cross. Since $R(3,3)=6$,   the graph $\Gamma$ possesses a clique or an anti-clique on 3 vertices. However, as in a CAT(0) square complex, three distinct hyperplanes cannot pairwise intersect, the intersection graph $\Gamma$ must have an anti-clique  $T$ consisting of three hyperplanes. If $T$ contains $\hh$, then we are done; else, we take a suitable translate of $T$. The highest exponents appear when $T=\{b^{-2}\hh,b^2\hh, b^3\hh \}$ and in this case, we take $b^{-2}T$ as our chosen set of facing triples. 

%If $\hh \cap b\hh = \emptyset$ and $\hh \cap b^2\hh = \emptyset$, then $\{\hh,b\hh,b^2\hh\}$ is the required triple and we are done. Therefore assume that $\hh \cap b\hh = \emptyset$ but $\hh \cap b^2\hh \neq \emptyset$. As we are working in a square complex, it must be that $b^{-2}\hh \cap b^2\hh = \emptyset$ and so, $b^{-1}\hh \cap b^3\hh = \emptyset$. If now $b^{-2}\hh \cap b^3\hh = \emptyset$ , then $\{b^{-2}\hh,b^{-1}\hh,b^3\hh\}$ is a non-intersection clique and so $\{b^{-1}\hh,\hh,b^4\hh\}$ form the required triple. If on the other hand, $b^{-2}\hh \cap b^3\hh \neq \emptyset$, then $\hh \cap b^3\hh = \emptyset$ and so $\{b^{-1}\hh,\hh,b^3\hh\}$ is a facing triple. 

%Finally assume that $\hh \cap b\hh \neq \emptyset$ but $\hh \cap b^2\hh = \emptyset$. If $b^{-2}\hh \cap b^2\hh = \emptyset$, then $\{b^{-2}\hh,\hh,b^2\hh\}$ is a facing triple. Otherwise the hyperplanes $b^{-2}\hh$ and $b^2\hh$ intersect, as do $b^{-1}\hh$ and $b^3\hh$ . As we are dealing with a square, we must have that $b^{-2}\hh \cap b^3\hh = \emptyset$ and  $\hh \cap b^3\hh = \emptyset$. Therefore, $\{b^{-2}\hh, \hh,b^3\hh\}$ is a facing triple. 

We now have $s,t$ of absolute value at most 4, such that $\hh$, $b^s\hh$ and $b^t\hh$ are disjoint and form a facing triple. Translating by $c$, we get that $c\hh$, $cb^s\hh$ and $cb^t\hh$ form a facing triple of hyperplanes. As $b$ is also peripheral to $c\hh$, there exists $\eta \leq 2$ such that $b^\eta c \hh \cap c\hh = \emptyset$. Now, $cb^s\h^*$ and $cb^t\h^*$ are both disjoint half-spaces that lie inside the half-space $b^\eta c\h^*$. This implies that the two elements $cb^sc^{-1}b^{-\eta}$ and $cb^tc^{-1}b^{-\eta}$ (each of length $\leq 10$) freely generate a free semigroup, a contradiction. 

Let us now assume that $b$ is parallel to $\hh$ but peripheral to $c\hh$. It follows from Observation \ref{ParallelMeansDisjoint} that for any $i\in\Z$, $b^i\hh=\hh$ or $b^i\hh\cap\hh=\emptyset$. First let us consider the case that $b^2\hh=\hh$. Note that since we are assuming that $\Aut(X)$ acts with no inversions, we have that $b^2\h=\h$. Now since $b$ is peripheral to $c\hh$, for $k=1$ or $2$, we have that $b^{2k} c\hh\cap c\hh=\emptyset$. We thus obtain a ping pong triple of halfspaces $\{\h, c\h, b^{2k} c\h\}$ for the elements $c$ and $b^{2k}c$. From Lemma \ref{FreeSG1} we see that $c$ and $b^{2k }c$ freely generate a free semigroup, a contradiction since these are words of length at most $6$ in $a$ and $b$. (See Figure \ref{ParallelPeripheral1}.)

\begin{figure}[ht]
\includegraphics{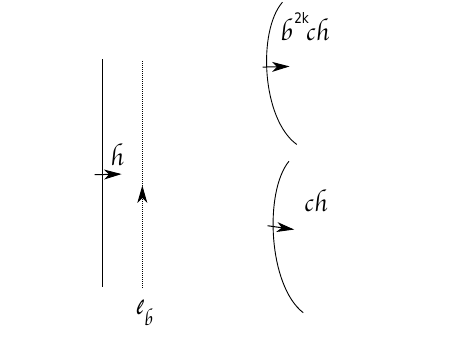}
\caption{If $b$ stabilizes $h$, we obtain a ping-pong triple of hyperplanes. }
\label{ParallelPeripheral1}
\end{figure}

We may thus assume that $b\hh\cap\hh=\emptyset$ and $b^2\hh\cap\hh=\emptyset$. Only one of $b\hh, b^2\hh$ can separate $\hh$ and $c\hh$, for otherwise we would have $b\h\subset b^2\h$ or $b^2\h\subset\h$. So for some $\ep=1$ or $2$, we can assume that $b^\ep\hh$ does not separate $\hh$ and $c\hh$. Note also that since $c\hh$ is peripheral to $b$, one cannot have $b^\epsilon\hh\subset c\h$.

If $c\hh\cap b^\ep\hh=\emptyset$, then we obtain a ping-pong triple of halfspace $\{c\h^*,\h^*,b^\ep\h^*\}$ for the words $c^{-1}$ and $b^\ep c^{-1}$. Since these are words of length at most $4$ in $a$ and $b$, we have a contradiction. (See Figure \ref{ParallelPeripheral2}.)

\begin{figure}[ht]
\includegraphics{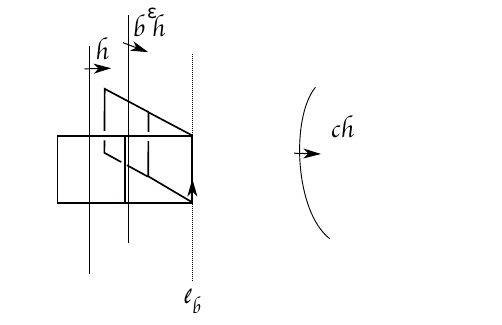}
\caption{If $c\hh\cap b^\ep\hh=\emptyset$ and $b^\ep\hh$ does not separate $\hh$ and $c\hh$, we obtain a ping pong triple }
\label{ParallelPeripheral2}
\end{figure}

Thus we assume that $b^\ep\hh\cap c\hh\not=\emptyset$ and refer to Figure \ref{ParallelPeripheral3}. Since, by Observation \ref{ParallelMeansDisjoint}, any hyperplane in $sk(b)$ intersects $b^\ep \hh$, and we are assuming that $b^\ep \hh\cap c\hh\not=\emptyset$, the 2-dimensionality of $X$ implies that any hyperplane in $sk(b)$ is disjoint from $c\hh$. Moreover, by Observation \ref{ParallelMeansDisjoint}, we have that for any hyperplane $\hk$ in $sk(b)$, 
$b\k\subset k$ for some choice of halfspace $\k$ associated to $\hk$. We may further choose $\k$ such that $c\h\subset \k\cap b\k^*$.

Applying $c^{-1}$, we see that $\h\subset c^{-1}\k\cap c^{-1}b\k^*$. Applying $b^\ep$, we now see that $b^\ep c^{-1}\hk\subset b^\ep\h^*\subset\h$. Thus we have a ping pong triple of half spaces $\{c^{-1}b\k^*, c^{-1}\k^*,  b^\ep c^{-1}\k^*\}$ for the elements  $c^{-1}b^{-1}c$ and $b^\ep c^{-1} b^{-1} c$. So by Lemma \ref{SemigroupPingPong} we have that $c^{-1}b^{-1}c$ and $b^\ep c^{-1} b^{-1} c$ generate a free semigroup and these are words of length at most $7$.

\begin{figure}[h]
\includegraphics{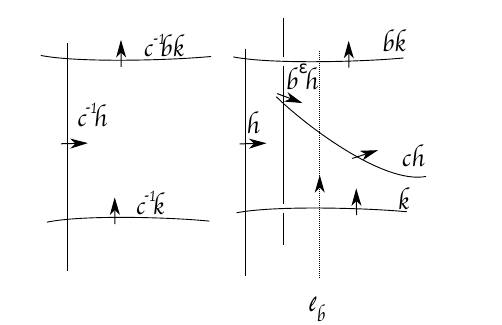}
\caption{If $c\hh\cap b^\ep\hh\not=\emptyset$, we obtain a ping pong triple.}  
\label{ParallelPeripheral3}
\end{figure}

We may thus assume that $b$ is parallel to both $\hh$ and $c\hh$. Assume,  that $d(\ell_b, \hh)\leq d(\ell_b, c\hh)$. (There is no loss of generality here, for if $d(\ell_b, c\hh)\leq d(\ell_b, \hh)$, we will reverse the roles of $\hh$ and $c\hh$ in the following argument.) 

 As before, we first consider what happens if $\hh$ is not stabilized by $b^2$. Here we obtain $\hh, b\hh$ and $b^2\hh$ are disjoint. 
 We cannot have that $b\hh=c\hh$ or $b^2\hh=c\hh$, for then we would obtain $c^{-1}b\hh$ or $c^{-1}b^2\hh$ is an inversion of $\hh$. 
 Thus, we have that $b\hh\subset c\h^*$ and $b^2\hh\subset c\h^*$. We now proceed as in the case in which $c\hh$ is peripheral to $b$ to produce a ping pong triple of halfspaces $\{c\h^*,\h^*,b^\ep\h^*\}$ for the words $c^{-1}$ and $b^\ep c^{-1}$. (The configuration is the same as in Figure \ref{ParallelPeripheral2} except that here $c\hh$ is parallel to $\ell_b$.)
 
So assume $b^2\hh=\hh$. Again, as above, if $b^2$ did not also stabilize $c\hh$, we would obtain a small ping pong triple. Thus $b^2$ stabilizes $c\hh$ as well. Since $b^2$ stabilizes $c\hh$ (and the action is inversion-free), we have an axis for $b^2$ in $c\h\cap c^2\h^*$. We can now carry out all the above arguments replacing $\hh$ and $c\hh$ with $c\hh$ and $c^2\hh$ to conclude that $b^2$ stabilizes $c^2\hh$. Proceeding in this way we see that $b$ is parallel to every hyperplane of $P$ and that $b^2 P=P$. 

We are left to show that $bP\subset sk(a)$. We now argue as in the proof of Lemma \ref{AllOrNothing} using the pair $bab^{-1}$ and $a$. The pairs $ba^2b^{-1}, a^2ba^2b^{-1}$ and $ba^2b^{-1}, a^4ba^2b^{-1}$ made of words of length at most 8 in $a,b$ may freely generate free semigroups. But, we have assumed that there are no such free semi-groups. Hence, in our current case, Lemma \ref{AllOrNothing} implies that $a$ skewers every hyperplane in $bP$ or none of the hyperplanes in $bP$. In the former case, we get $bP\subset sk(a)$ as required. So suppose that $a$ does not skewer any hyperplane in $bP$. Note that $b\hh$ must be disjoint from $\hh$ and $c\hh$ because $\ell_b$ is parallel to all three. Similarly $bc\hh$ is disjoint from $\hh$ and $c\hh$. Since $\ell_ b\subset \h\cap c\h^*$, we have either $b\h^*\subset\h\cap c\h^*$ or $bc\h\subset\h\cap c\h^*$, depending on which of $\hh$ or $c\hh$ is closer to $\ell_b$. In either case, we then get a small ping pong triple, a contradicition. 
\end{proof}

If $a$ and $b$ are elements such that there exists a disjoint skewer set $P$ for $a$  as in Proposition \ref{MainProp}, then we say that \emph{$b$ is subparallel to $a$}. 
\begin{corollary}
Given hyperbolic isometries $a$ and $b$ such that no words of length at most 10 generate a free semigroup of rank 2, $b$ is subparallel to $a$ if and only if $sk(a)-sk(b)\not=\emptyset$.
\end{corollary}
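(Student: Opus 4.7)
The plan is to prove the equivalence by unpacking the definition of \emph{subparallel} and leveraging the two results immediately preceding the corollary; no new hyperplane geometry is required.

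For the forward implication, I would assume $b$ is subparallel to $a$ and produce a witness hyperplane in $sk(a)-sk(b)$. By definition, there is a disjoint skewer set $P$ for $a$ satisfying the hypotheses of Proposition \ref{MainProp}, which in particular require that $b$ does not skewer any element of $P$. Since $a$ is hyperbolic, $sk(a)\neq\emptyset$, and the relevant $P$ in the definition is nonempty (it is the orbit-type set produced in the decomposition lemma). Thus $\emptyset \neq P\subset sk(a)\setminus sk(b)$, which gives the conclusion.

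For the reverse implication, I would start with a hyperplane $\hh \in sk(a)\setminus sk(b)$ and use the earlier decomposition lemma to place $\hh$ inside some disjoint skewer set $P$ for $a$. The key step is to upgrade the single fact "$b$ does not skewer $\hh$" to the global statement "$b$ does not skewer any element of $P$": this is exactly what Lemma \ref{AllOrNothing} (All or Nothing) provides, and the assumption that no pair of words of length at most $7$ generate a free semigroup is stronger than the length-$6$ hypothesis needed by that lemma. Once that upgrade is in hand, the hypotheses of Proposition \ref{MainProp} are verified for the pair $(a,b)$ and the disjoint skewer set $P$, so $b$ is subparallel to $a$ by definition.

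I do not expect any serious obstacle: the corollary is essentially a restatement of Proposition \ref{MainProp} in more symmetric language. The only minor subtlety to watch is numerology of word lengths, ensuring that the length-$7$ hypothesis in the corollary is strong enough to feed both the length-$6$ requirement of Lemma \ref{AllOrNothing} and the length-$7$ requirement of Proposition \ref{MainProp}; both are automatic.
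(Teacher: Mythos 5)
Your proposal is correct and follows essentially the same route as the paper: the forward direction reads off $P\subset sk(a)\setminus sk(b)$ directly from the definition of subparallel, and the reverse direction uses Lemma \ref{AllOrNothing} to promote a single non-skewered hyperplane to its whole disjoint skewer set before invoking Proposition \ref{MainProp}. The word-length bookkeeping you flag is indeed harmless, exactly as you say.
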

\begin{proof}
If $b$ is subparallel to $a$, then by definition, there exists a disjoint skewer set for $a$ such that $b$ is parallel to all the hyperlanes in $P$. Thus $P\subset sk(a)-sk(b)$.
Conversely, if there exists $\hh\in sk(a)-sk(b)$, then by Lemma \ref{AllOrNothing}, the entire  disjoint parallel set $P$ for $a$ containing $\hh$ is not skewered by $b$. 
Then by Proposition \ref{MainProp}, $b$ is subparallel to $a$. 
\end{proof}

From this corollary, we see that there are three possibilities for two hyperbolic elements $a$ and $b$ so that words of length at most 10 do not freely generate a free semigroup.

\begin{enumerate}[I]
\item $sk(a)=sk(b)$
\item $b$ is subparallel to $a$ and $a$ is subparallel to $b$
\item $b$ is subparallel to $a$ and $a$ is not subparallel to $b$ (or the same with the roles of $a$ and $b$ reversed)
\end{enumerate}

We claim that in each of these cases, we can find an invariant line or flat for $<a,b>$. 

\begin{proposition}
Let $a$ and $b$ be hyperbolic isometries such that no words in $a$ and $b$ of length at most 10 freely generate a free semigroup, then there exists a Euclidean subcomplex of $X$ invariant under $<a,b>$.
\label{FlatForAPair}
\end{proposition}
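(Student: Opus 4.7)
My plan is to split along the three mutually exclusive possibilities identified by the corollary just established, and in each case produce either an invariant line or an invariant Euclidean plane. The unifying tool is the structural lemma on $E_g$, which guarantees that any relevant 2-dimensional Euclidean factor is either a plane or contains an $\Aut$-invariant line.

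\textbf{Case I} ($sk(a)=sk(b)$). If the two skewer sets coincide, then so do the sets of hyperplanes parallel to them: $\hH_{\parallel}(a)=\hH_{\parallel}(b)$. Consequently the Euclidean factors $E_a$ and $E_b$, being determined combinatorially by the skewer set, coincide as a common subcomplex $E$. Both $a$ and $b$ permute $sk(a)=sk(b)$ and $\hH_{\parallel}(a)=\hH_{\parallel}(b)$, hence both preserve $E$. Applying the earlier structural lemma to $E$ yields either a plane ($E$ itself is the required flat) or an $\Aut(E)$-invariant line, which is then preserved by $\langle a,b\rangle$.

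\textbf{Case II} (mutual subparallelism). By definition, there is a disjoint skewer set $P_a\subset sk(a)$ such that $b$ is parallel to every hyperplane of $P_a$, and symmetrically a disjoint skewer set $P_b\subset sk(b)$ with $a$ parallel to every hyperplane of $P_b$. Parallelism forces every hyperplane of $P_b$ (as an element of $sk(b)$) to cross every hyperplane of $P_a$, and vice versa; by 2-dimensionality the hyperplanes within $P_a$ are pairwise disjoint, the hyperplanes within $P_b$ are pairwise disjoint, and $P_a$ and $P_b$ form two pairwise-transverse families. Their associated carriers assemble into a 2-dimensional flat $F$. Using $a^2P_a=P_a$, $b^2P_b=P_b$, together with $b^2P_a=P_a$ and $a^2P_b=P_b$ from Proposition \ref{MainProp}, $F$ is $\langle a^2,b^2\rangle$-invariant. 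Taking the union of the $\langle a,b\rangle$-translates of $F$ remains a flat by 2-dimensionality, and is the required invariant Euclidean subcomplex.

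\textbf{Case III} ($b$ subparallel to $a$, but $a$ not subparallel to $b$). By the corollary $sk(b)\subseteq sk(a)$. The disjoint skewer set $P\subset sk(a)$ of Proposition \ref{MainProp} (to which $b$ is parallel) is transverse to every hyperplane of $sk(b)\subseteq sk(a)$, because hyperplanes skewered by $b$ must cross hyperplanes to which $b$ is parallel. Hence $sk(a)$ contains two transverse families, and $E_a$ is 2-dimensional. The structural lemma then gives either a plane or an $\Aut(E_a)$-invariant line. To upgrade this to $\langle a,b\rangle$-invariance, I use the two conclusions of Proposition \ref{MainProp}: $bP\subseteq sk(a)$ and $b^2P=P$, together with the fact that $b$ permutes $sk(b)\subseteq sk(a)$. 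These control the $b$-orbit of the defining hyperplanes of $E_a$ so that the $\langle a,b\rangle$-saturation of $E_a$ remains a 2-dimensional Euclidean subcomplex, to which the structural lemma applies.

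\textbf{Main obstacle.} Case III is by far the most delicate: unlike Case I, $sk(a)$ is not obviously $b$-invariant, and unlike Case II, there is no ready-made transverse partner to $P$ coming from $b$'s own skewer structure. The work is in showing that tracking how $b$ moves hyperplanes of $sk(a)\setminus sk(b)$ (through $bP\subseteq sk(a)$ and $b^2P=P$) is enough to keep the Euclidean subcomplex 2-dimensional and $\langle a,b\rangle$-invariant, without introducing facing triples that would destroy the flat structure.
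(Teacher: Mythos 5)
Your three-case decomposition matches the paper's, and your Cases II and III are aimed in the right direction, but there is a genuine gap in Case I and a flawed step in Case II. In Case I, when $sk(a)=sk(b)$ the common parallel set is $Y=E\times T$, and the hard subcase is $E=\R$ with $T$ a nontrivial tree: there the ``Euclidean factor'' $\R$ is only a factor of the product decomposition, not a canonically embedded subcomplex, so saying that $a$ and $b$ ``preserve $E$'' and invoking the structural lemma on $E$ gives nothing --- an actual invariant line has the form $\R\times\{p\}$ and requires a point $p\in T$ fixed by both $a$ and $b$. The paper's proof spends most of its effort here: $a$ and $b$ each have nonempty fixed sets $F_a,F_b$ in $T$; if $F_a\cap F_b\neq\emptyset$ one gets an invariant line, and if $F_a\cap F_b=\emptyset$ then $ab$ acts hyperbolically on $T$ with axis $\ell$ meeting both fixed sets, and one must use the no-short-free-semigroup hypothesis together with Proposition \ref{TreeCase} (applied to $ab$ and $a(ab)a^{-1}$, words of length at most $4$) to force $a$ and $b$ to stabilize $\ell$, yielding the invariant flat $\R\times\ell$. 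Your proposal omits this entire argument, which is the substantive content of Case I.

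In Case II, establishing only $\langle a^2,b^2\rangle$-invariance and then ``taking the union of the $\langle a,b\rangle$-translates of $F$'' does not work: a union of translates of a flat need not be a flat (or even convex), and 2-dimensionality does not rescue this. The paper instead shows the flat itself is $\langle a,b\rangle$-invariant by proving $b(sk(a))=sk(a)$ and $a(sk(b))=sk(b)$: since $b$ is parallel or peripheral to every hyperplane of $sk(a)$, Proposition \ref{MainProp}(2) gives $bP\subset sk(a)$ for \emph{every} disjoint skewer set $P$ of $a$, hence $b\,sk(a)\subset sk(a)$, and the same for $b^{-1}$; since $E=Y_a\cap Y_b$ is determined by $sk(a)\cup sk(b)$, invariance follows. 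The same mechanism (showing $b(sk(a))=sk(a)$, hence $b(Y_a)=Y_a=E$) is what closes Case III, which you correctly identify as unresolved in your write-up.
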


\begin{proof}
We analyze the three cases above. Suppose we are in Case I, so that $sk(a)=sk(b)$. Then we consider $Y=Y_a=Y_b=E \times T$. If $T$ is trivial (i.e a single point), then we have that both $a$ and $b$ leave $E$ invariant, as required. Otherwise we have that $Y= \R\times T$, where $a$ and $b$ both act by vertical translation. We consider the action of $a$ and $b$ on $T$. Both $a$ and $b$ have nonempty fixed point sets, which we denote $F_a$ and $F_b$. If $F_a\cap F_b\not=\emptyset$, then choosing $p\in F_a\cap F_b$ we have that both $a$ and $b$ stabilize the line $\R\times \{p\}\subset \R\times T$. 

So suppose that $F_a\cap F_b=\emptyset$.  As in \cite{Serre77}, we have that $ab$ is hyperbolic in its action on $T$, stabilizing a line $\ell$ which intersects both $F_a$ and $F_b$. We claim that $a$  stabilizes $\ell$. For otherwise, consider the line $a\ell$. This is stabilized by the element $u=a(ab)a^{-1}$. If $a\ell\not=\ell$,  then we obtain that $(ab)^{\pm 1}$ and $u^{\pm 1}$ freely generate a free semigroup by Proposition \ref{TreeCase} , contradicting our assumption. Similarly, we see that $b$ stabilizes $\ell$ as well. Thus $<a,b>$ stabilizes the flat $\R\times \ell\subset \R\times T$, as required. 

We now consider Case II, so that $a$ and $b$ are subparallel to one another. Note that since an axis for $a$ is parallel to a hyperplane (in $sk(b)$), then all the hyperplanes in $sk(a)$ are disjoint.  Similarly all the hyperplanes in $sk(b)$ are disjoint. Note also every hyperplane in $sk(a)$ crosses every hyperplane in $sk(b)$, so that they determine a flat $E=Y_a\cap Y_b$.
Moreover since $b$ is parallel to one of the hyperplanes in $sk(a)$, it is  parallel or peripheral to all hyperplanes in $sk(a)$. But then, Proposition \ref{MainProp} implies that for all disjoint skewer sets $P\subset sk(a)$, we have $bP\subset sk(a)$. Thus $b sk(a)\subset sk(a)$. By the same argument, we obtain $b ^{-1} sk(a)\subset sk(a)$, so that $b(sk(a))=sk(a)$. 

Similarly, we have that $a(sk(b))=sk(b)$.  We thus have that $<a,b>$ stabilizes the flat $E$. 

 Finally, we consider Case III. In this case there exists a disjoint skewer set $P$ for $a$, so that $b$ is parallel to $P$. However, since $a$ is not subparallel to $b$, $a$ also skewers every element in $sk(b)$. Since the hyperplanes in $sk(b)$ all intersect the hyperplanes in $P$, we have that $sk(a)$ has crossing hyperplanes. It follows that the parallel set 
 $Y_a$ for $a$ is of the form $Y_a=E \times \{\text{point}\}$. It is also easy to see that $b$ stabilizes $E$, so that $<a,b>$ stabilizes $E$. 
\end{proof}

We are now ready to prove Theorem \ref{MainTheorem}, which we restate here for convenience.

\begin{theorem*} 
Let $F$ be a finite collection of hyperbolic automorphisms of a \cat square complex. Then either

\begin{enumerate}  
\item there exists a pair of words of length at most 10 in $F$ which freely generate a free semigroup, or 
\item there exists a flat (of dimension 1 or 2) in $X$ stabilized by all elements of $F$.
\end{enumerate}
\end{theorem*}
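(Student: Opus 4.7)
The plan is to extend Proposition \ref{FlatForAPair} from pairs to a finite collection $F$ by passing to the parallel set of a well-chosen element and then handling the induced tree action.

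Assume no pair in $F$ admits words of length at most $7$ freely generating a free semigroup. I first split on whether any $a\in F$ has a $2$-dimensional Euclidean factor $E_a$. If so, then $Y_a=E_a$ is already a $2$-flat, and I claim every $b\in F$ stabilizes it: Observation \ref{ParallelMeansDisjoint} rules out $a$ being subparallel to any $b$ (since crossings in $sk(a)$ prevent $a$'s axis from being parallel to a hyperplane), so the trichotomy for each pair $(a,b)$ reduces to Case I, Case II, or Case III with $b$ subparallel to $a$, and the proof of Proposition \ref{FlatForAPair} gives $b(sk(a))=sk(a)$ in each of these situations. Hence $b$ stabilizes $Y_a=E_a$, and $E_a$ is the desired common flat.

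Otherwise every $a\in F$ has $E_a=\R$. Fix $a_1\in F$ and let $Y=Y_{a_1}=\R\times T$ with $T=T_{a_1}$ a tree. For any $b\in F$ the pair $(a_1,b)$ is now Case I or Case II (Case III would require a $2$-dimensional Euclidean factor), and in both cases the proof of Proposition \ref{FlatForAPair} gives $b(sk(a_1))=sk(a_1)$; since $Y$ is determined by $sk(a_1)\cup\hH_\parallel(a_1)$, this forces $b$ to stabilize $Y$ as well. Thus $\langle F\rangle$ acts on $Y$ preserving the product structure, and each $a_i$ projects to $(\alpha_i,\beta_i)$ with $\beta_i\in\Aut(T)$: Case I (\emph{horizontal}) elements have $\beta_i$ elliptic, Case II (\emph{vertical}) elements have $\beta_i$ hyperbolic on $T$. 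A common fixed point $p\in T$ or invariant line $\ell\subset T$ for $\langle F\rangle$ on $T$ then lifts to the required $F$-invariant flat $\R\times\{p\}$ or $\R\times\ell$ in $X$.

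To produce this fixed point or line, suppose first some $a_i$ is vertical. For any other vertical $a_j$, Proposition \ref{TreeCase} applied to $\beta_i,\beta_j$ forces either a shared axis or a free semigroup on $T$ in length-$\leq 2$ words; the latter pulls back to $X$ in length-$\leq 2$ words, contradicting the hypothesis. So all verticals share a single axis $\ell\subset T$. For a horizontal $a_j$ paired with $a_i$, the pair $(a_j,a_i)$ is Case II; since $sk(a_j)=sk(a_1)$ forces $Y_{a_j}=Y_{a_1}$, the $2$-flat $Y_{a_j}\cap Y_{a_i}\subset Y_{a_1}$ stabilized by $a_j$ must be $\R\times\ell$, so $\beta_j$ stabilizes $\ell$ and $\ell$ is $\langle F\rangle$-invariant. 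If instead every $a_i$ is horizontal, each $\beta_i$ is elliptic with fixed subtree $F_{\beta_i}\subset T$; when every pair $F_{\beta_i}\cap F_{\beta_j}$ is nonempty, Helly's theorem for trees produces a common fixed point, and otherwise the Case I argument of Proposition \ref{FlatForAPair} makes some $\beta_i\beta_j$ hyperbolic on $T$ with axis $\ell$ stabilized by both $\beta_i,\beta_j$, while for any third $\beta_k$ the hyperbolic pair $\beta_i\beta_j,\ \beta_k\beta_i\beta_j\beta_k^{-1}$ either shares $\ell$ or yields a free semigroup on $T$ by Proposition \ref{TreeCase}, whose lift $a_ia_j,\ a_ka_ia_ja_k^{-1}$ has word length at most $4$ in $F$ and contradicts the hypothesis. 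Thus $\beta_k\ell=\ell$, $\ell$ is $\langle F\rangle$-invariant, and the main obstacle I anticipate is careful bookkeeping of word lengths in this tree-promotion step to ensure every free-semigroup witness lies in the length-$7$ ball of $F$; the subcases above give length-$\leq 4$ witnesses comfortably within the bound, but inversions, reflections in the $\R$ factor, and mixed vertical-horizontal substitutions should be rechecked.
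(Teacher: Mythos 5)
Your proposal is correct and follows essentially the same strategy as the paper: the pairwise trichotomy (equal skewer sets / mutually subparallel / one-sided subparallel), an early disposal of the case where some element has a two-dimensional Euclidean factor, and then the reduction to an action on $\R\times T$ handled by Helly's theorem for trees together with the shared-axis-or-free-semigroup dichotomy of Proposition \ref{TreeCase}. The only difference is organizational (you branch on the dimension of $E_a$ rather than on the existence of a type III pair, and you phrase the mixed vertical/horizontal step via the projected tree action), and your word-length bookkeeping stays within the required bound.
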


%\begin{theorem}
%Let $G$ be a group acting freely on a \cat square complex and let $S$ be any generating set for $G$.  Then either there exists a pair of  words of length at most 10 that freely generate a free semigroup, or $G$ is finitely generated and virtually abelian. 
%\end{theorem}

\begin{proof}
Consider $F=\{s_1,s_2, \ldots,s_n\}$. Each of the pairs $\{s_i,s_j\}$ satisfy one of the cases I, II, or III, above. 

If there exists a pair of type III, without loss of generality, assume that is the pair $\{s_1,s_2\}$, with $s_2$ subparallel to $s_1$ and $s_1$ not subparallel to $s_2$. In this case,
 the parallel set $Y_{s_1}=E\times \{point\}$. In this case, for every other $s_i$, we have that the pair $s_1, s_i$ is either of type I or III. In either case, we obtain that $s_i $ stabilizes $E$ and we are done. 

So we suppose that no pair $\{s_i,s_j\}$ is of type III. Suppose, that there exists a pair, say $\{s_1, s_2\}$, which is of type II. Let $E$ be the flat in $X$ on which $<s_1, s_2>$ acts. For any other $s_i$, we have that the pairs $\{s_1, s_i\}$ and $\{s_2, s_i\}$ is of type I or II. It cannot be that both pairs are of type I since $sk(s_1)\cap sk(s_2)=\emptyset$. Also, it cannot be that $s_i$ is subparallel to both $s_1$ and $s_2$, for otherwise $\ell_{s_i}$ would be parallel to hyperplanes in $sk(s_1)$ and in $sk(s_2)$, but every hyperplane in $sk(s_1)$ crosses every hyperplane in $sk(s_2)$ in a single point. Thus a line cannot be parallel to a hyperplane in $sk(s_1)$ and a hyperplane in $sk(s_2)$. It follows that, without loss of generality, $s_i$ is subparallel to $s_1$ and $sk(s_i)=sk(s_2)$. It then follows that $s_i$ stabilizes $E$. 

Finally, suppose that all the pairs $s_i, s_j$ are of type I. Thus $sk(s_i)=sk(s_j)$ for all $i,j$. Thus $G$ stabilizes  $Y=E\times T= E_{s_i}\times T_{s_i}$. If $E$ contains squares, then $T$ is trivial and $s_i$ stabilizes $E$ as required. So suppose that $Y=\R\times T$, and each $s_i$ acts ``vertically''. That is, $s_i$ acts by translation along $\R$ and has a fixed point in $T$. 

We now examine the action of $G$ on $T$. Let $F_i$ denote the fixed set of $s_i$. If for each pair $i, j$, $F_i\cap F_j\not=\emptyset$, then by a standard result, 
$X_n=\cap_{i=1}^n F_i\not=\emptyset$.  Choose a vertex $p_n\in X_n$. Then $H_n=<s_1,\ldots,s_n>$ acts on $\ell_n=\R\times p_n$ by translations. Thus $H_n$ stabilizes a flat in $X$. 
%When $S$ is finite, we are done. If $S$ is infinite, consider $G=H_1< H_2<\ldots$. This is an infinite sequence of infinite cyclic groups. But these groups all act freely and cellularly on $\R$ (as a cube complex) by projection onto the $\R$ factor. Thus the sequence terminates and we are done. 

So suppose that there exists a pair, say $F_1$ and $F_2$, such that $F_1\cap F_2=\emptyset$. In this case, as in the proof of Proposition \ref{FlatForAPair}, there exist a line $\ell\subset T$ on which $<s_1,s_2>$ acts as a dihedral group. As in the proof of Proposition \ref{FlatForAPair}, we also obtain that for every $i$, $s_i$ stabilizes $\ell$. Thus $G$ stabilizes $\ell$, and therefore the flat $\R\times \ell$, as required. 
\end{proof}

\begin{remark}
\emph{The proof of the Theorem shows that in case (1), there is a subset $F_0$ of $F$ made of 2 or 3 elements and a pair of words of length $\leq 10$ in $F_0$ which generate the free semi-group of rank 2.  }
\end{remark}

Corollary \ref{MainCorollary} now follows from the Main Theorem since when the action of a group is free, stabilizing a flat implies the group is virtually abelian, by the Bieberbach Theorem.
%\begin{corollary}
%Let $S$ be a finite generating set of hyperbolic elements for $G$.  Then the growth rate of $G$ relative to $S$ is at least $\sqrt[7]{2}$ or $G$ is virtually abelian. In particular if $G$ is a finitely generated group acting  freely on a \cat square complex, then either $w(G)\geq \sqrt[7]{2}$ or $G$ is virtually abelian.
%\end{corollary}
 
\end{document}